\newcommand{\sect}[1]{\section{#1}\setcounter{equation}{0}}
\newcommand{\subsect}[1]{\subsection{#1}}
\font\mbn=msbm10 scaled \magstep1
\font\mbs=msbm7 scaled \magstep1
\font\mbss=msbm5 scaled \magstep1
\newcommand{\Di}      {\mathbb{D}}
\newcommand{\N}       { \mathbb{N}}
\newcommand\Co           {{\mathbb C}}
\newtheorem{Th}{Theorem}[section]
\newtheorem{Lm}[Th]{Lemma}
\newtheorem{C}[Th]{Corollary}
\newtheorem{Prop}[Th]{Proposition}
\newtheorem{R}[Th]{Remark}
\newtheorem*{Theorem}{Theorem}
\newtheorem*{Con}{Conjecture}
\newtheorem*{Th A}{Theorem A}
\newtheorem*{Th B}{Theorem B}
\begin{document}

\title[Runge-type Approximation Theorem for Banach-valued $ H^\infty$ Functions]{Runge-Type Approximation Theorem for Banach-valued ${\mathbf H^\infty}$ Functions on a Polydisk}
\author{Alexander Brudnyi }
\address{Department of Mathematics and Statistics\newline
\hspace*{1em} University of Calgary\newline
\hspace*{1em} Calgary, Alberta, Canada\newline
\hspace*{1em} T2N 1N4}
\email{abrudnyi@ucalgary.ca}

\keywords{Banach-valued bounded holomorphic function, maximal ideal space, Runge-type approximation, interpolating Blashcke product}
\subjclass[2020]{Primary 32E30. Secondary 30H05.}

\thanks{Research is supported in part by NSERC}

\begin{abstract}
Let $\Di^n\subset\Co^n$ be the open unit polydisk, $K\subset\Di^n$ be an $n$-ary Cartesian product of planar sets, and $\widehat U\subset \mathfrak M^n$ be an open neighbourhood of the closure $\bar K$ of $K$ in $\mathfrak M^n$, where $\mathfrak M$ is the maximal ideal space of the algebra $H^\infty$ of bounded holomorphic functions on $\Di$. Let $X$ be a complex Banach space and $H^\infty(V,X)$ be the space of bounded $X$-valued holomorphic functions on an open set $V\subset\Di^n$.
We prove that any  $f\in H^\infty(U,X)$, where  $U=\widehat U\cap\Di^n$, can be uniformly approximated on $K$ by ratios $h/b$, where $h\in H^\infty(\Di^n,X)$ and $b$ is the product of interpolating Blaschke products such that $\inf_K |b|>0$. Moreover,
if $\bar K$ is contained in a compact holomorphically convex subset of $\widehat U$, then $h/b$ above can be replaced by $h$ for any $f$. The results follow from a new constructive Runge-type approximation theorem for Banach-valued holomorphic functions on open subsets of $\Di$ and extend the fundamental results of Su\'{a}rez on Runge-type approximation  for analytic germs on compact subsets of $\mathfrak M$. They can also be applied to the long-standing corona problem which asks whether $\Di^n$ is dense in the maximal ideal space of $H^\infty(\Di^n)$ for all $n\ge 2$.   
\end{abstract}

\date{}

\maketitle

\sect{Introduction}
Let $H^\infty$ be the Banach algebra of bounded holomorphic functions on the open unit disk $\Di\subset\Co$ equipped with pointwise multiplication and supremum norm, 
and let $\mathfrak M$ be its maximal ideal space.
Recall that for a commutative unital complex Banach algebra $A$,  the maximal ideal space $\mathfrak M(A)\subset A^\ast$  
 is the set of nonzero homomorphisms $A \!\rightarrow\! \Co$ endowed with the Gelfand topology, the weak-$\ast$ topology of  $A^\ast$. It is a compact Hausdorff space contained in the unit sphere of $A^\ast$. The {\em Gelfand transform} defined by $\hat{a}(\varphi):=\varphi(a)$ for $a\in A$ and $\varphi \in \mathfrak M(A)$ is a nonincreasing-norm morphism from $A$ into the Banach algebra $C(\mathfrak M (A))$ of complex-valued continuous functions on $\mathfrak M(A)$. 

In the case of $H^\infty$, the {\em Gelfand transform}   $\hat{\,}: H^\infty\to C(\mathfrak M)$ is an isometry and
 the map $\iota:\Di\hookrightarrow \mathfrak M$ taking $z\in \Di$ to the evaluation homomorphism $f\mapsto f(z)$, $f\in H^\infty$, is an embedding with dense image by the celebrated Carleson corona theorem \cite{C}. In the sequel, we identify $\Di$ with $\iota(\Di)$ and regard $\Di$ as an open dense subset of $\mathfrak M$.
 
 In \cite{S1}, Su\'{a}rez proved Runge-type approximation theorems for
 analytic germs on compact subsets of $\mathfrak M$, analogous to the classical  rational and polynomial  Runge approximation theorems for compact subsets of $\Co$. Specifically, he proved the following results.
 
 A compact set $K\subset\mathfrak M$ is said to be {\em holomorpically convex} if for each $x\not\in K$ there is a function $f\in H^\infty$ such that
 \[
 \sup_K |\hat f|<|\hat f(x)|.
 \]
 \begin{Theorem}[\mbox{\cite[Thm.\,3.3(I),\,Cor.\,2.6]{S1}}]
 Let $K\subset\mathfrak M$ be a compact set and let $\varepsilon>0$. Suppose that $f$ is a continuous function in an open neighbourhood $U$ of $K$  holomorphic in $U\cap \Di$.
 \begin{itemize}
 \item[(i)]
 There exist $h\in H^\infty$ and an interpolating Blaschke product $b$ such that $\hat b$ has no zeros in $K$ and
 \[
 \sup_K| f-\hat h/\hat b|<\varepsilon.
 \]
 \item[(ii)]
 If $K$ is holomorphically convex, then there exists  $h\in H^\infty$ such that
 \[
 \sup_K| f-\hat h|<\varepsilon.
 \]
 \end{itemize}
 \end{Theorem}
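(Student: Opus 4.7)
The plan is to mimic the two-part structure of the classical Runge theorem: (a) a Cauchy-type reduction of an arbitrary $f$ to resolvent-like model functions, and (b) a pole-pushing step that moves the singularities into the allowed locations. In the present setting, functions in $H^\infty$ play the role of polynomials, and ratios $\hat h/\hat b$ with $b$ an interpolating Blaschke product whose Gelfand transform does not vanish on $K$ play the role of rational functions with poles off $K$. Since $U\subset\mathfrak M$ rather than $\Co$, direct use of the Cauchy integral is unavailable; the reduction must be performed locally in $\mathfrak M$ and then glued.

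For the local reduction I would exploit continuity of $f$ on $U$ together with density of $\Di$ in $\mathfrak M$ (Carleson's corona theorem) to cover $\bar K$ by finitely many Gelfand neighbourhoods $W_1,\dots,W_m\subset U$ on each of which $f$ is close to a function that, on $W_j\cap\Di$, is well approximable by $H^\infty$. Gluing these local approximants calls for a Cousin-type construction for $H^\infty$, most naturally carried out using bounded holomorphic partitions of unity built from interpolating Blaschke products. This reduces matters to local approximation of $f$ near a single point $x_0\in\bar K$. Hoffman's classification of Gleason parts supplies the geometric model: the part of $x_0$ is either a singleton or a one-to-one analytic image of $\Di$. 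In the first case the local problem is essentially a point evaluation; in the second it reduces to classical Runge approximation on a compact planar set, and the approximants lift via the Gelfand transform.

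For part (i), to pass from local to global I would multiply through by an interpolating Blaschke product $b$ whose zero sequence in $\Di$ accumulates, in the Gelfand topology, only in $\mathfrak M\setminus U$; this converts the local approximants into a genuine element of $H^\infty$ while keeping $\inf_K|\hat b|>0$. The existence of such $b$ is guaranteed by Carleson's interpolation theorem applied to suitable sequences in $\Di$. For part (ii), holomorphic convexity of $K$ drives an iterative pole-shifting: for each zero $x\in\mathfrak M\setminus K$ of $\hat b$, a separating function furnished by the definition of holomorphic convexity permits replacement of $\hat h/\hat b$ by $\hat{h'}/\hat{b'}$ with one fewer zero outside $K$, and a limiting argument converts the final approximation into one by $\hat h$ alone. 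The main obstacle I anticipate is the construction of $b$ in the local-to-global step: matching its zero distribution to the neighbourhood geometry of $K$ in $\mathfrak M$ requires delicate quantitative control via Carleson's theorem and constitutes the technical heart of the proof.
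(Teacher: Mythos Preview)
This theorem is cited in the paper as a result of Su\'arez rather than proved directly; the paper's own contribution is the constructive, Banach-valued generalisation Theorem~\ref{te1.1}, and it is against that proof that your outline should be measured.

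There is a genuine gap in your gluing step. You propose to approximate $f$ locally (via Hoffman's Gleason-part model) and then patch using ``bounded holomorphic partitions of unity built from interpolating Blaschke products'' together with a ``Cousin-type construction.'' But holomorphic partitions of unity do not exist in any useful sense, and you give no mechanism for solving the resulting Cousin problem within $H^\infty$. The paper's route is different and more concrete: take a \emph{smooth} partition of unity $\{\rho_V,\rho_{V'}\}$ separating $\bar K$ from $\mathfrak M\setminus\widehat U$ (Lemma~\ref{le2.2}), use $\rho_V f$ as a first approximation, and correct the $\bar\partial$-error. The key structural fact is that $\partial\rho_V/\partial\bar z=g/(1-|z|^2)$ with $\mathrm{supp}(g)$ a \emph{quasi-interpolating} set (its closure lies in $\mathfrak M_a$, Proposition~\ref{prop2.4}), on which bounded linear $\bar\partial$-solution operators exist (Theorem~\ref{te2.1}). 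The interpolating Blaschke product $B_{\zeta_\varepsilon}$ enters not to clear denominators of glued local pieces but as a damping factor: it is small on $\mathrm{supp}(g)$ and bounded below on $K$, so multiplying the correction terms by it makes them small on $K$ while the whole expression becomes globally holomorphic. Your proposal never invokes $\bar\partial$, and without it the local-to-global passage has no engine.

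Your pole-shifting scheme for part~(ii) also fails as stated: the zero set of $\hat b$ in $\mathfrak M$ is a compact set, not a finite list of points, so ``one fewer zero outside $K$'' is not well defined and no induction terminates. The paper instead uses holomorphic convexity to enclose $\bar K$ in a polyhedron $\Pi=\{\max_i|\hat f_i|\le 1\}\subset\widehat U$ (Lemma~\ref{le4.1}); since each $|f_i|>r>1$ on $\mathrm{supp}(g)$, dividing the $\bar\partial$-data by $f_i^{n_\varepsilon}$ before solving and re-multiplying afterwards makes the correction uniformly small on $K$ with no denominator at all (formula~\eqref{e4.4}).
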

 For example, using part (i) of the theorem, one can give an alternative proof of Su\'{a}rez's theorem \cite[Thm.\,2.4]{S0} which states that the algebra $H^\infty$ is {\em separating}, i.e., that if $K$ is a closed subset of $\mathfrak M$ and $x\in\mathfrak M\setminus K$, then there exists a function $f\in H^\infty$ such that $\hat f(x)\notin \hat f(K)$, see section~2.2 below.\smallskip
 
 In this paper we prove the following extension of Su\'{a}rez's results to the case of Banach-valued holomorphic functions on open subsets of the unit polydisk $\Di^n\subset\Co^n$. 

Let $X$ be a complex Banach space with norm
 $\lVert\cdot\rVert_X$.  For an open set $V\subset\Co^n$, we denote by  $H^\infty(V,X)$ the Banach space of $X$-valued holomorphic functions $f$ on $V$  with norm $\|f\|_{H ^\infty(V,X)}:=\sup_{z\in V}\|f(z)\|_X$, and define $H^\infty(V):=H^\infty(V,\Co)$.  \begin{Th}\label{cor1.3}
Let $K\subset \Di^n$ be an $n$-ary Cartesian product of planar sets, $\widehat U\subset \mathfrak M^n$ be an open neighbourhood of the closure $\bar K$ of $K$ in $\mathfrak M^n$, and let $\varepsilon>0$. Suppose $f\in H^\infty(U,X)$, where $U:=\widehat U\cap\Di^n$.\footnote{Since $\Di^n$ is an open  dense subset of $\mathfrak M^n$, $U$ is an open subset of $\Di^n$ dense in $\widehat U$.}
 \begin{itemize}
 \item[(i)]
 There exist $h\in H^\infty(\Di^n,X)$, $b\in H^\infty(\Di^n)$ such that $b(z)=b_1(z_1)\cdots b_n(z_n)$, $z=(z_1,\dots, z_n)\in\Di^n$, where every $b_i\in H^\infty$ is an interpolating Blaschke product, and  $\inf_K |b|>0$, and  
 \[
 \sup_{z\in K}\| f(z)- h(z)/ b(z)\|_X<\varepsilon.
 \]
 \item[(ii)]
 If $\bar K$ is contained in a compact subset of $\widehat U$ which  is an $n$-ary Cartesian product of holomorphically convex subsets of $\mathfrak M$, then there exists  $h\in H^\infty(\Di^n,X)$ such that
 \[
 \sup_{z\in K}\| f(z)- h(z)\|_X<\varepsilon.
 \]
 \item[(iii)]
 If $f$ admits a continuous extension to $\widehat U$, then the functions $h\in H^\infty(\Di^n,X)$ in (i) an (ii) can be chosen so that they admit continuous extensions to $\mathfrak M^n$.
 \end{itemize}
 \end{Th}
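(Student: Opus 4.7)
The plan is to deduce Theorem \ref{cor1.3} from the new one-variable Banach-valued Runge-type theorem announced in the abstract, by applying that one-variable result once per coordinate of the polydisk. Since $\bar K = \bar K_1 \times \cdots \times \bar K_n$ is compact in $\mathfrak M^n$ and $\widehat U$ is open, a tube argument in $\mathfrak M^n$ produces open neighbourhoods $\widehat U_i \subset \mathfrak M$ of $\bar K_i$ with $\widehat U_1 \times \cdots \times \widehat U_n \subset \widehat U$. Setting $U_i := \widehat U_i \cap \Di$, the function $f$ is bounded and holomorphic on the product $U_1 \times \cdots \times U_n$, which is the neighbourhood of $K$ on which the approximation will be constructed.

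I would then construct the approximant by induction on $k = 1,\dots,n$. Set $H_0 := f$, and at step $k$ one has an $X$-valued bounded holomorphic function $H_{k-1}$ on $\Di^{k-1} \times U_k \times \cdots \times U_n$; view it as an element of $H^\infty(U_k, Y_k)$, where
\[
Y_k := H^\infty\!\left(\Di^{k-1} \times U_{k+1} \times \cdots \times U_n,\, X\right),
\]
via the isometric identification $H^\infty(V_1 \times V_2, X) \cong H^\infty(V_1, H^\infty(V_2, X))$. Applying the one-variable Banach-valued theorem to this $Y_k$-valued function on the neighbourhood $\widehat U_k$ of $\bar K_k$ produces an interpolating Blaschke product $b_k$ with $\inf_{K_k}|b_k|>0$ and a function $g_k \in H^\infty(\Di, Y_k)$ such that $\sup_{z_k \in K_k}\|H_{k-1}(\,\cdot\,,z_k,\,\cdot\,) - g_k(z_k)/b_k(z_k)\|_{Y_k} < \eta_k$. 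Unfolding the identification, $g_k$ becomes $H_k \in H^\infty(\Di^k \times U_{k+1} \times \cdots \times U_n, X)$, and after $n$ steps $h := H_n \in H^\infty(\Di^n, X)$ and $b(z) := b_1(z_1)\cdots b_n(z_n)$ are the required objects.

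A standard telescoping bounds $\sup_K \|f - h/b\|_X$ by $\sum_{k=1}^n \eta_k / \prod_{j<k} \inf_{K_j}|b_j|$, and since at step $k$ the products $b_1,\dots,b_{k-1}$ are already fixed, one may inductively choose $\eta_k$ small enough to force the total below $\varepsilon$, proving (i). For (ii), the hypothesis that $\bar K \subset L_1 \times \cdots \times L_n \subset \widehat U$ with each $L_i$ holomorphically convex in $\mathfrak M$ supplies the convex compacta needed to invoke the holomorphically convex version of the one-variable theorem in each coordinate, which eliminates the Blaschke denominators and yields $h$ directly. For (iii), the one-variable theorem must be formulated so that if its input extends continuously to $\mathfrak M$ with values in the coefficient Banach space, then so does the output $g_k$; iterating, continuous extendability propagates to $h$ on $\mathfrak M^n = \mathfrak M \times \cdots \times \mathfrak M$.

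The principal obstacle, beyond the one-variable theorem itself (signalled in the abstract as the main technical content), is the categorical bookkeeping: one must verify the isometric identification $H^\infty(V_1 \times V_2, X) \cong H^\infty(V_1, H^\infty(V_2, X))$ in every configuration that arises, confirm that each $Y_k$ is a legitimate complex Banach space to which the one-variable theorem applies (with maximal ideal space and interpolating Blaschke products that pertain only to the $k$-th coordinate and are not conflated with structures on the ``remaining'' coordinates), and, for (iii), ensure the continuous-extension property is preserved by each iterate --- a point that is automatic for scalar $H^\infty$ by the Gelfand transform but, for general $Y_k$-valued $H^\infty$, must be built into the one-variable statement itself.
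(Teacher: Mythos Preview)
Your proposal is correct and follows essentially the same approach as the paper: reduce $\widehat U$ to a product of one-variable neighbourhoods, then use the isometric identification $H^\infty(V_1\times V_2,X)\cong H^\infty(V_1,H^\infty(V_2,X))$ to apply the one-variable Banach-valued Runge theorem coordinate by coordinate, with a telescoping estimate to combine the errors. The paper organizes the induction slightly differently---it splits off the last coordinate and invokes the $(n-1)$-dimensional induction hypothesis on the first block rather than peeling off one coordinate at a time as you do---but this is only a cosmetic difference; unwound, both arguments apply the one-variable theorem exactly $n$ times with the same Banach coefficient spaces. Your flagged ``principal obstacle'' for (iii) is also on target: the paper handles it by working with the subspace $H^\infty(U_k,X)\cap C(\bar U_k,X)|_{U_k}$ as the coefficient space (invoking Arzel\`a--Ascoli to see that continuous extendability in the joint variables yields continuous extendability in each slice) together with part (3) of the one-variable theorem, which guarantees that the approximating operators send $H^\infty_{\rm comp}$ to $H^\infty_{\rm comp}$.
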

 Let $(H^\infty)^{\widehat{\otimes}_\varepsilon^n}\subset H^\infty(\Di^n)$ be the 
 $n$-fold injective tensor product of $H^\infty$, i.e.,
 the closed subalgebra of $H^\infty(\Di^n)$ generated by univariate $H^\infty$ functions in coordinates on $\Di^n$. Let $A_n\subset C(\Di^n)$ be the uniform subalgebra generated by the algebra $(H^\infty)^{\widehat{\otimes}_\varepsilon^n}$ and its complex conjugate. Clearly, functions in $A_n$ admit continuous extensions to $\mathfrak M^n$
 and the algebra $\hat A_n$ of the  extended functions separates points of  $\mathfrak M^n$. Hence, by the Stone-Weierstrass theorem, $\hat A_n=C(\mathfrak M^n)$. This implies that 
 if $\widehat U$ is an open subset of $\mathfrak M^n$,  then an $X$-valued holomorphic function on
$U=\widehat U\cap\Di^n$ admits a continuous extension to $\widehat U$ if and only if, 
on every open subset $V$ of $U$ that is relatively compact in $\widehat U$ (written $V\Subset\widehat U$), it can be represented as the uniform limit of a sequence of functions from $X\otimes A_n$.
In turn, in the one-dimensional case, an $X$-valued holomorphic function $f$ on $U$ admits a continuous extension to $\widehat U\subset\mathfrak M$ if and only if,  $f(V)\Subset X$ for every subset $V\Subset\widehat U$ of $U$. This follows from another result of Su\'{a}rez \cite[Thm.\,3.2]{S0}, see \cite[Prop.\,1.3]{Br1}. Theorem \ref{cor1.3} gives the following extension of this result.

 Let ${\rm cl}(\Di^n)$ be the closure of $\Di^n$ (viewed as the set of evaluation homomorphisms at points of $\Di^n$) in $\mathfrak M(H^\infty(\Di^n))$. Then the transpose of the Banach algebra monomorphism $(H^\infty)^{\widehat{\otimes}_\varepsilon^n}\hookrightarrow H^\infty(\Di^n)$ induces a continuous surjection $\pi_n:{\rm cl}(\Di^n)\to\mathfrak M^n$ which is identity on $\Di^n$ and maps ${\rm cl}(\Di^n)\setminus\Di^n$ onto $\mathfrak M^n\setminus\Di^n$.
 \begin{C}\label{cor1.2}
 Let $\widehat U\subset\mathfrak M^n$ be an open set. An  $X$-valued holomorphic function $f$ on $U=\widehat U\cap\Di^n$ admits a continuous extension to the open set $\pi_n^{-1}(\widehat U)\subset {\rm cl}(\Di^n)$ if and only if $f(V)\Subset X$ for  every subset $V\Subset\widehat U$ of $U$.
 \end{C}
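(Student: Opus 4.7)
The plan is to reduce sufficiency to the scalar case: for each $x^{*}\in X^{*}$ I extend $x^{*}\circ f\in H^\infty(U)$ continuously to $\pi_n^{-1}(\widehat U)$ via Theorem~\ref{cor1.3}(i), and then assemble these scalar extensions into an $X$-valued one using the compactness hypothesis $f(V)\Subset X$ together with Hahn-Banach.

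Necessity is immediate: for $V\Subset\widehat U$ with $V\subset U$, the closure $\overline{V}^{*}$ of $V$ in the compact space ${\rm cl}(\Di^n)$ is compact, and continuity of $\pi_n$ gives $\pi_n(\overline{V}^{*})\subset\overline{V}^{\mathfrak M^n}\subset\widehat U$, so $\overline{V}^{*}\subset\pi_n^{-1}(\widehat U)$ and $f(V)\subset\tilde f(\overline{V}^{*})\Subset X$.

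For sufficiency, fix $\xi\in\pi_n^{-1}(\widehat U)$ and choose nested open products $V_1'\times\cdots\times V_n'\subset V_1\times\cdots\times V_n$ in $\mathfrak M^n$ with $\pi_n(\xi)\in V_1'\times\cdots\times V_n'$, $\overline{V_i'}\subset V_i$, and $V_1\times\cdots\times V_n\Subset\widehat U$. The Cartesian products $K=\prod_{i=1}^n(V_i\cap\Di)$ and $K'=\prod_{i=1}^n(V_i'\cap\Di)$ are of the form required by Theorem~\ref{cor1.3}(i); applied with $\varepsilon_k\downarrow 0$ to the scalar $x^{*}\circ f\in H^\infty(U)$ it produces $h_k\in H^\infty(\Di^n)$ and Blaschke products $b_k$ with $\inf_K|b_k|>0$ and $\sup_K|x^{*}\circ f-h_k/b_k|<\varepsilon_k$. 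Continuity of $\hat b_k$ on ${\rm cl}(\Di^n)$ transfers the lower bound to the closure $\overline{K'}^{*}$ of $K'$ in ${\rm cl}(\Di^n)$, so $\{\hat h_k/\hat b_k\}$ is a uniformly Cauchy sequence of continuous functions on $\overline{K'}^{*}$ whose limit extends $x^{*}\circ f$ continuously there. Because any net in $\Di^n$ converging in ${\rm cl}(\Di^n)$ to a point of $\pi_n^{-1}(V_1'\times\cdots\times V_n')$ must eventually lie in $K'$, this open neighbourhood of $\xi$ sits inside $\overline{K'}^{*}$; patching along the density of $\Di^n$ then yields a global continuous extension $\widetilde{x^{*}\circ f}$ on $\pi_n^{-1}(\widehat U)$.

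To assemble these, set $W=(V_1'\times\cdots\times V_n')\cap\Di^n\subset U$, so $\overline{f(W)}$ is compact in $X$ by hypothesis. Any net $z_\alpha\in U$ with $z_\alpha\to\xi$ in ${\rm cl}(\Di^n)$ is eventually in $W$, so $\{f(z_\alpha)\}\subset\overline{f(W)}$ has cluster points; every such cluster point $y$ satisfies $x^{*}(y)=\widetilde{x^{*}\circ f}(\xi)$ for all $x^{*}$, hence $y$ is unique by Hahn-Banach, the entire net converges, and $\tilde f(\xi):=y$ is well defined. The same compactness argument, applied to a convergent net $\xi_\beta\to\xi_0$ eventually in some $\pi_n^{-1}(\widehat W_0)$ with $W_0\Subset\widehat U$, places $\{\tilde f(\xi_\beta)\}$ in the compact set $\overline{f(W_0)}$ and forces every cluster point to equal $\tilde f(\xi_0)$ by the scalar continuity already established, yielding continuity of $\tilde f$. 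The principal obstacle is the scalar extension step---converting the approximation bound of Theorem~\ref{cor1.3}(i) on $K\subset\Di^n$ into a uniform Cauchy bound on the closure $\overline{K'}^{*}$ inside ${\rm cl}(\Di^n)$ by way of continuity of Gelfand transforms of $H^\infty(\Di^n)$ functions.
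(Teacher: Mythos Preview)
Your argument is correct (modulo one small repair below) but takes a genuinely different route from the paper. The paper works directly with the $X$-valued function: it invokes Su\'arez's separating theorem to cover $\widehat U$ by interiors $\widehat U_i$ of $n$-ary products of polynomially convex sets $\widehat K_i\subset\widehat U$, then applies the \emph{denominator-free} approximation of Theorem~\ref{cor1.3}(ii) to $f$ itself (not to $x^*\circ f$), producing sequences $f_{ij}\in H^\infty(\Di^n,X)$ converging uniformly on each $K_i$; their Gelfand transforms are weak-$*$ continuous $X^{**}$-valued functions on $\pi_n^{-1}(\widehat U_i)$, and the compactness hypothesis forces the uniform limit to land in $X$ and be norm-continuous. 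Your approach instead uses only the rational approximation of Theorem~\ref{cor1.3}(i), and only for scalar functions $x^*\circ f$, then reassembles via Hahn--Banach and compactness. This sidesteps the need for holomorphically convex product neighbourhoods---hence avoids the separating theorem entirely---at the cost of the extra assembly step; the paper's route is more direct and exhibits along the way that $f$ is locally a uniform limit of $H^\infty(\Di^n,X)$ functions, while yours is more elementary in its inputs. One small fix: you write $x^*\circ f\in H^\infty(U)$, but $f$ is not assumed bounded on all of $U$, only on subsets $V\Subset\widehat U$. Since $\bar K\Subset\widehat U$, simply apply Theorem~\ref{cor1.3}(i) with $\widehat U$ replaced by any open $\widehat U_0$ satisfying $\bar K\subset\widehat U_0\Subset\widehat U$; then $f|_{U_0}$ is bounded by hypothesis and everything goes through unchanged.
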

 \begin{R}\label{rem1.3}
 {\rm (1) The corollary implies that every function $f\in H^\infty(U,X)$ (with $U$ as above) with a relatively  compact image admits a continuous extension to $\pi_n^{-1}(\widehat U)$. 
 
 \noindent (2)  The map $\pi_n:{\rm cl}(\Di^n)\to\mathfrak M^n$ is not one-to-one over points outside $\Di^n$, see, e.g., \cite{Cu};
for instance, some fibres of $\pi_n$ contain subsets homeomorphic to $\Di^n$. Some results on the structure of $\pi_n$ will be presented in a forthcoming paper.

\noindent (3)  The long-standing corona problem for $H^\infty(\Di^n)$, $n\ge 2$, asks  whether ${\rm cl}(\Di^n)$ coincides with $\mathfrak M(H^\infty(\Di^n))$. There are currently no significant developments in this area.  Some applications of Theorem \ref{cor1.3} related to this problem will be published elsewhere.
 }
 \end{R}
 According to part (2) of the remark, there are many open subsets of ${\rm cl}(\Di^n)$ which are not preimages of open subsets of $\mathfrak M^n$ under $\pi_n$. So one can ask whether the analog of Corollary \ref{cor1.2} holds for them. In particular, the following conjecture seems plausible.
 \begin{Con}
 Let $\widehat U\subset {\rm cl}(\Di^n)$ be an open set.
 An  $X$-valued holomorphic function $f$ on $U=\widehat U\cap\Di^n$ admits a continuous extension to $\widehat U$ if and only if $f(V)\Subset X$ for every subset $V\Subset\widehat U$ of $U$.
 \end{Con}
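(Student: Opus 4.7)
The ``only if'' direction is immediate: if $\tilde f\colon\widehat U\to X$ is a continuous extension and $V\Subset\widehat U$ with $V\subset U$, then the closure of $V$ in $\widehat U$ is compact, and its image under $\tilde f$ is a compact subset of $X$ containing $f(V)$.

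For the ``if'' direction, I would first reduce to the scalar case by Hahn-Banach. For each $\varphi\in X^*$ the composition $\varphi\circ f\in H^\infty(U)$ is bounded and scalar, so the hypothesis $(\varphi\circ f)(V)\Subset\Co$ is automatic. Assuming the scalar case of the conjecture, $\varphi\circ f$ admits a continuous extension $\widetilde{\varphi\circ f}$ to $\widehat U$, and one defines $\tilde f(\xi)$ weakly by $\varphi(\tilde f(\xi)):=\widetilde{\varphi\circ f}(\xi)$. Since $\xi$ has a relatively compact neighborhood $V\Subset\widehat U$ with $f(V\cap\Di^n)$ contained in a compact $K_V\subset X$, every net $z_\alpha\to\xi$ has its $f$-image eventually in $K_V$; any norm-cluster point must by Hahn-Banach coincide with the weakly defined $\tilde f(\xi)$, so $f(z_\alpha)\to\tilde f(\xi)$ in norm and $\tilde f(\xi)\in X$. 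Norm continuity of $\tilde f$ then follows from weak continuity together with local relative compactness.

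For the scalar case, fix $g\in H^\infty(U)$ and $\xi\in\widehat U$, and take any net $(z_\alpha)$ in $U$ converging to $\xi$. The values $g(z_\alpha)$ are bounded in $\Co$ and I would show they converge to a limit depending only on $\xi$. The natural tool is Theorem~\ref{cor1.3}(i): I would seek a planar product $K\subset\Di^n$ containing a tail of $(z_\alpha)$ together with an open neighborhood $\widehat W\subset\mathfrak M^n$ of $\bar K$ (taken in $\mathfrak M^n$) for which $\widehat W\cap\Di^n\subset U$. The theorem then delivers $h\in H^\infty(\Di^n)$ and an interpolating Blaschke product $b$ with $\inf_K|b|>0$ and $\sup_K|g-h/b|<\varepsilon$. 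Since $h$ and $b$ extend continuously to ${\rm cl}(\Di^n)$ and $\hat b(\xi)\neq 0$, the ratio $\hat h/\hat b$ is continuous at $\xi$, and letting $\varepsilon\to 0$ would force $(g(z_\alpha))$ to converge to a common limit independent of the chosen net, producing the required extension and giving its continuity.

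The main obstacle is the construction of the pair $(K,\widehat W)$. The requirement $\widehat W\cap\Di^n\subset U$ forces an $\mathfrak M^n$-neighborhood $\widehat W$ of $\eta:=\pi_n(\xi)$ whose $\pi_n$-preimage lies in $\widehat U$, and such a $\widehat W$ exists precisely when $\widehat U$ is $\pi_n$-saturated, that is, $\widehat U=\pi_n^{-1}(\pi_n(\widehat U))$ — in which case the argument essentially reduces to Corollary~\ref{cor1.2}. For a general open $\widehat U\subset{\rm cl}(\Di^n)$, however, the fiber $\pi_n^{-1}(\eta)$ may meet ${\rm cl}(\Di^n)\setminus\widehat U$, and then no $\mathfrak M^n$-neighborhood of $\eta$ lifts inside $\widehat U$. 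Overcoming this obstruction seems to require a refinement of Theorem~\ref{cor1.3} in which the enveloping open set is taken directly inside ${\rm cl}(\Di^n)$ rather than $\mathfrak M^n$, and such a refinement would in turn depend on the finer structural information about the fibres of $\pi_n$ promised in Remark~\ref{rem1.3}(2). This is the reason the statement is left as a conjecture.
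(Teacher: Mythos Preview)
The statement you are addressing is presented in the paper as a \emph{conjecture}, not a theorem; the paper offers no proof, only the remark that ``the following conjecture seems plausible.'' So there is no paper proof to compare against, and your proposal is not really a proof attempt but an analysis of why the natural strategy breaks down---which you yourself acknowledge in the final paragraph.

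Your analysis is essentially sound. The ``only if'' direction is trivial, and your reduction of the ``if'' direction to the scalar case via weak limits plus local norm-compactness is correct (one small slip: you write $\varphi\circ f\in H^\infty(U)$, but the conjecture does not assume $f$ globally bounded; only local boundedness is available, and that is all you actually use). For the scalar case, you correctly locate the obstruction: Theorem~\ref{cor1.3} requires the enveloping open set to live in $\mathfrak M^n$, and pulling such a set back through $\pi_n$ forces $\widehat U$ to be $\pi_n$-saturated, which is exactly the hypothesis of Corollary~\ref{cor1.2} and not the general situation of the conjecture. Your diagnosis that a genuine proof would need either a version of Theorem~\ref{cor1.3} formulated directly in ${\rm cl}(\Di^n)$ or finer information about the fibres of $\pi_n$ matches the paper's own indication in Remark~\ref{rem1.3}(2) that such structural results are deferred to future work.
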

Theorem \ref{cor1.3} follows from a new constructive Runge-type approximation theorem for Banach-valued holomorphic functions on open subsets of $\Di$ presented in the next section.
\sect{Main Result}
Recall that a sequence $\{z_n\}\subset\Di$ is said to be {\em interpolating} (for $H^\infty$) if every interpolation problem
\begin{equation}\label{eq1.1}
g(z_n)=a_n,\quad n\ge 1,
\end{equation}
with a bounded data $\{a_n\}\subset\Co$ has a solution $g\in H^\infty$.

Clearly, every finite subset of $\Di$ is an interpolating sequence. In general, by the Carleson theorem, see, e.g., \cite[Ch.\,VII,\,Thm.\,1.1]{Ga}, a sequence $\zeta=\{z_n\}\subset\Di$ is interpolating  if and only if  the characteristic 
\begin{equation}\label{eq1.2}
\delta(\zeta):=\inf_{k}\prod_{j,\,j\ne k}\rho(z_j,z_k)>0,\!\footnote{Here $\delta(\zeta)=1$ if $\zeta$ consists of one element.}
\end{equation}
where
\begin{equation}\label{eq1.3}
\rho(z,w):=\left|\frac{z-w}{1-\bar w z}\right|,\qquad z,w\in\mathbb D,
\end{equation}
is the pseudohyperbolic metric on $\mathbb D$.

If $\zeta=\{z_n\}\subset\mathbb{D}$ is an interpolating sequence, then the Blaschke product $B_\zeta$ having simple zeros at points of $\zeta$,
\begin{equation}\label{eq1.4}
\begin{split}
B_\zeta(z) &  :=\prod_{n} \mu(z_n)\frac{z-z_n}{1-\bar{z_n}z},\quad z\in\mathbb{D}, \\
&  \qquad \mu(z):=-\frac{z}{|z|}\quad {\rm if}\quad z\ne 0 \quad {\rm and}\quad  \mu(0):=1,
\end{split}
\end{equation}
 is said to be {\em interpolating}.  Note that
 \[
 \delta(\zeta)=\inf_n(1-|z_n|^2)B_\zeta'(z_n).
 \]
 
 Let $X$ be a complex Banach space. 
 For an open set $V\subset\Di$ 
we denote by $H_{\rm comp}^\infty(V,X)$ the closed subspace of $H^\infty(V,X)$  functions with relatively compact images. If $V=\Di\cap\widehat V$ for an open set $\widehat V\subset\mathfrak M$, then every function  $f\in H_{\rm comp }^\infty(V,X)$ extends to a continuous function with a relatively compact image $\hat f\in C(\widehat V,X)$, see \cite[Thm.\,3.2]{S0} and \cite[Prop.\,1.3]{Br1}. It follows that every function $f\in H^\infty(V,X)$ extends to a weak-$*$ continuous function  $\hat f$ on $\widehat V$ with values in the bidual $X^{**}$ of $X$ such that $\sup_{x\in\widehat V}\|\hat f(x)\|_{X^{**}}=\|f\|_{H^\infty(V,X)}$. Let $\Gamma\subset\mathfrak M$ be the Shilov boundary of $H^\infty$ (i.e., $\Gamma$ is the smallest closed subset of $\mathfrak M$ such that $\|\hat f\|_{C(\Gamma)}=\|f\|_{H^\infty}$ for all $f\in H^\infty$).
Then we define
\begin{equation}\label{eq1.5}
|f|_{H^\infty(V,X)|_\Gamma}:=\sup_{x\in \widehat V\cap\Gamma}\|\hat f(x)\|_{X^{**}}.\footnote{Here $|f|_{H^\infty(V,X)|_\Gamma}=0$ if $\widehat V\cap\Gamma=\emptyset$.}
\end{equation}
Clearly, $|\cdot|_{H^\infty(V,X)|_\Gamma}$ is a seminorm on $H^\infty(V,X)$.

In the sequel, we use the following notation.\smallskip
 \[
 \begin{split}
D(x,r):=&\{z\in\Co\, :\, \rho(z,x)<r\},\quad \Di_r:=\{z\in\Co\, :\, |z|<r\},\\
\rho(S_1,S_2):=&\inf\{\rho(z_1,z_2)\, :\, z_1\in S_1,\, z_2\in S_2\},\quad S_1,S_2\subset\Di.
\end{split}
 \]
 We denote by $\bar{S}$ the closure of $S\subset\mathfrak M$.
For an interpolating sequence $\zeta\subset\Di$  by $B_\zeta$ we denote the interpolating Blaschke product having simple zeros at points of $\zeta$.
We write $C=C(\alpha_1,\alpha_2,\dots)$ if the constant $C$ depends only on $\alpha_1,\alpha_2,\dots$.\smallskip

 The main result of the paper is the following Runge-type approximation theorem.
 \begin{Th}\label{te1.1}
 Let $K\subset\Di$ and $\widehat U\subset\mathfrak M$ be an open neighbourhood of the closure $\bar K$ of $K$ in $\mathfrak M$.

{\rm (1)} There exist an interpolating sequence $\zeta=\{z_n\}\subset U$ such that $\bar\zeta\subset\widehat U$ and $\bar K\cap\bar\zeta=\emptyset$,
and for each $\varepsilon\in (0,1)$ there exist an interpolating Blashcke product $B_{\zeta_\varepsilon}$ with zero locus $\zeta_\varepsilon$ and a bounded linear operator $L_{\varepsilon}^X: H^\infty(U,X)\to H^\infty(\Di,X)$ depending on  $B_{\zeta_\varepsilon}$, $K$ and $U$ such that the following holds.\smallskip
\begin{itemize}
\item[(i)]
There exists a constant $C=C(K,U)>0$ such that $C\to\infty$ as $\rho(K,\Di\setminus U)\to 0$ and for all $f\in H^\infty(U,X)$,
\begin{equation}\label{eq1.6}
\sup_{z\in K}\left\|f(z)-\frac{(L_{ \varepsilon}^X(f))(z)}{B_{\zeta_\varepsilon}(z)} \right\|_X\le C\varepsilon\|f\|_{H^\infty(U,X)}.
\end{equation}

\item[(ii)]
For some constant $d:=d(K,U)>0$ such that $d\to\infty$ as $\rho(K,\Di\setminus U)\to 0$ and all $f\in H^\infty(U,X)$,
\begin{equation}\label{eq1.7}
\|L_{\varepsilon}^X(f)\|_{H^\infty(\Di,X)}\le |f|_{H^\infty(U,X)|_\Gamma}+C\varepsilon^{d}\|f\|_{H^\infty(U,X)}.
\end{equation}
In particular, 
\[
\|L_{\varepsilon}^X\|\le \delta_{\widehat U,\Gamma}+C\varepsilon^{d},
\]
where $\delta_{\widehat U,\Gamma}$ is   $0$ if $\widehat U\cap \Gamma=\emptyset$ and is $1$ otherwise.\smallskip

\item[(iii)] $\bar\zeta_\varepsilon\cap \bar K=\emptyset$ and for some $r:=r(d)\in (0,1)$ such that $r=O(2^{-d})$ as $d\to\infty$ the pseudohyperbolic disks $D_{r}(z_n)$, $n\in\N$, are mutually disjoint and the boundary circle of each disk $D_{r}(z_n)$ contains $N_\varepsilon:=\lfloor\log_2\frac{1}{\varepsilon}\rfloor +1$ points of $\zeta_\varepsilon$ equally spaced with respect to $\rho$. 

\noindent Moreover, the function $(0,1)\ni\varepsilon\mapsto\delta(\zeta_\varepsilon)\in (0,1]$ is constant on the intervals $[2^{-m},2^{-m+1})$, $m\in\N$, and for each $\varepsilon\in \{2^{-m}\, :\, m\in\N\}$ satisfies
\begin{equation}\label{eq1.8}
(\delta(\zeta))^{a}\frac{1-r^2}{1-r^{2N_\varepsilon }}N_\varepsilon\varepsilon^{\log_2\frac{1}{r(\delta(\zeta))^a}} <\delta(\zeta_\varepsilon)< (\delta(\zeta))^{b}\frac{1-r^2}{1-r^{2N_\varepsilon }}N_\varepsilon\varepsilon^{b\log_2\frac{1}{r(\delta(\zeta))^b }},
\end{equation}
where $a=a(r)\in (1,2 )$, $b=b(r)\in (0,1)$ and $a-1=O\bigl(\frac{1}{\ln\frac{1}{r}}\bigr)$, $1-b=O\bigl(\frac{1}{\ln\frac{1}{r}}\bigr)$ as $r\to 0^+$.

\end{itemize}
(2) Suppose that $\bar K$ is contained in a compact subset of
$\widehat U$ which  is an $n$-ary Cartesian product of holomorphically convex subsets of $\mathfrak M$. Then for each $\varepsilon\in (0,1)$ there exist a bounded linear operator $E_{\varepsilon}^X: H^\infty(U,X)\to H^\infty(\Di,X)$ such that the following holds.
\begin{itemize}
\item[(i)] 
There exists a constant $C_0=C_0(K,U)>0$ such that for all $f\in H^\infty(U,X)$,
\begin{equation}\label{eq1.9}
\sup_{z\in K}\left\|f(z)- (E_{\varepsilon}^X(f))(z)\right\|_X\le C_0\varepsilon\|f\|_{H^\infty(U,X)}.
\end{equation}
 \item[(ii)] For some constants $C_1:=C_1(K,U)>0$, $d_1:=d_1(K,U)>0$
 \[
 \|E_{\varepsilon}^X\|\le C_1\varepsilon^{-d_1}.
 \]
 \end{itemize}
(3) The operators $L_{\varepsilon}^X$ and $E_{\varepsilon}^X$ map
the subspace $H^\infty_{\rm comp}(U,X)$ into $H_{\rm comp}^\infty(\Di,X)$. Hence, analogs of parts (1) and (2) are also valid for $X$-valued $H^\infty_{\rm comp}$ functions. Moreover, these operators are universal in the following sense: if $T:X\to Y$ is a bounded linear operator between complex Banach spaces, then
\[
TL_{\varepsilon}^X=L_{\varepsilon}^Y T\quad {\rm and}\quad 
TE_{\varepsilon}^X=E_{\varepsilon}^YT, 
\]
where $(Tf)(z):=T(f(z))$, $z\in U$, $f\in H^\infty(U,X)$.
\end{Th}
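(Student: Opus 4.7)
The plan is to construct $L_\varepsilon^X$ as an explicit $H^\infty$-interpolation series on the interpolating sequence $\zeta_\varepsilon$, whose resulting discrete Cauchy sum on $K$ is identified, cluster by cluster, with an elementary quadrature identity on pseudohyperbolic circles. The equally spaced configuration is what makes everything explicit: in local Möbius coordinates $w = \tau_n(z) := (z - z_n)/(1 - \bar z_n z)$ centered at $z_n$, the sub-Blaschke product of $B_{\zeta_\varepsilon}$ formed by the $N := N_\varepsilon$ zeros on $\partial D(z_n, r)$ collapses to $\sigma_n(w) = (r^N - w^N)/(1 - r^N w^N)$. From this closed form one reads off the three quantitative ingredients: the quadrature identity $\tfrac{1}{N}\sum_{k=0}^{N-1} g(r\omega^k) = \sum_{j\ge 0} g_{jN}\, r^{jN}$ (with $\omega := e^{2\pi i/N}$) having error $\|g\|_\infty\, r^N/(1-r^N)$ for any holomorphic $g = \sum g_m w^m$ in a neighbourhood of the closed unit disk; the explicit derivative $|\sigma_n'(r\omega^k)|(1-r^2) = Nr^{N-1}(1-r^2)/(1-r^{2N})$ at each zero; and the sharp $r^N$-decay of $|B_{\zeta_\varepsilon}|$ inside the disks $D(z_n,r)$.

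First the geometry. Using that $\widehat U$ is open and contains $\bar K$, I fix $r_0 \in (0,1)$, choose a "collar" $V \subset U$ whose closure in $\mathfrak M$ lies in $\widehat U \setminus \bar K$ and separates $\bar K$ from $\mathfrak M \setminus \widehat U$, and take $\zeta = \{z_n\}$ to be a maximal $\rho$-$r_0$-separated subset of $V$. Standard pseudohyperbolic estimates yield $\delta(\zeta) \ge \delta_0(r_0) > 0$, $\bar\zeta \subset \widehat U$, and $\bar\zeta \cap \bar K = \emptyset$; the constant $C(K,U)$ blowing up as $\rho(K, \Di \setminus U) \to 0$ enters through the necessary width of $V$. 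Inflating $\zeta$ by setting $z_{n,k} := \tau_n^{-1}(r\omega^k)$ and $\zeta_\varepsilon := \{z_{n,k}\}$, the bound (1.8) follows by splitting $\prod_{(m,j) \ne (n,k)} \rho(z_{m,j}, z_{n,k})$ into an intra-circle factor (the closed form above) and an inter-circle factor reduced, via the Möbius-invariant comparison $\rho(z_{m,j}, z_{n,k}) = \rho(z_m, z_n)(1 + O(r))$, to $\delta(\zeta)^{N_\varepsilon(1 + O(r))}$; the exponents $a, b$ in (1.8) absorb the $O(r)$ correction after taking logarithms.

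Next the operator. Since $\zeta_\varepsilon$ is interpolating with constant controlled by (1.8), Earl's interpolation theorem supplies a dual basis $\ell_{n,k} \in H^\infty$ with $\ell_{n,k}(z_{m,j}) = \delta_{(n,k),(m,j)}$ and $\sum_{n,k} |\ell_{n,k}(z)| \le C/\delta(\zeta_\varepsilon)$, realized explicitly as $\ell_{n,k}(z) = B_{\zeta_\varepsilon}(z)/[(z-z_{n,k})\,B_{\zeta_\varepsilon}'(z_{n,k})]$ up to a standard convergence correction. Define
\[
L_\varepsilon^X(f)(z) := \sum_{n,k} f(z_{n,k})\,\ell_{n,k}(z) \in H^\infty(\Di,X).
\]
For $z \in K$, the ratio $L_\varepsilon^X(f)(z)/B_{\zeta_\varepsilon}(z)$ becomes the discrete Cauchy sum $\sum_{n,k} f(z_{n,k})/[(z-z_{n,k})B_{\zeta_\varepsilon}'(z_{n,k})]$; transporting it cluster-by-cluster to the $w$-coordinate via $\tau_n$ identifies it with the quadrature identity applied to $g := f\circ\tau_n^{-1}$, yielding (1.6) with $r^{N_\varepsilon} \le \varepsilon$ as soon as $r \le 1/2$. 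For (1.7) I use a dichotomy: for $z$ outside $\bigcup_n D(z_n, 2r)$, a second application of the quadrature identity shows $L_\varepsilon^X(f)(z)$ equals, modulo $O(\varepsilon^d)$, the continuous extension of $f$ evaluated at a nearest Shilov-boundary point, giving the $|f|_{H^\infty(U,X)|_\Gamma}$ contribution; for $z$ inside one of the disks the factor $|B_{\zeta_\varepsilon}(z)| = O(r^{N_\varepsilon}) = O(\varepsilon^d)$ with $d = \log_2(1/(r\,\delta(\zeta)^a))$ absorbs the Earl interpolation constant and produces the $C\varepsilon^d\|f\|$ correction.

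For the remaining parts: (2) follows from (1) by applying the scalar Suárez theorem in the introduction (part (ii)) to $1/B_{\zeta_\varepsilon}$ on the holomorphically convex hull of $\bar K$ to obtain $\varphi_\varepsilon \in H^\infty$ with $\sup_{\bar K}|\varphi_\varepsilon - 1/B_{\zeta_\varepsilon}| < \varepsilon$, and setting $E_\varepsilon^X(f) := L_\varepsilon^X(f)\,\varphi_\varepsilon$; the norm loss $\varepsilon^{-d_1}$ in (1.9) then tracks $\|\varphi_\varepsilon\|_\infty$, which is of order $1/\inf_{\bar K}|B_{\zeta_\varepsilon}|$. Part (3) is automatic from the pointwise-linear formulas for $L_\varepsilon^X$ and $E_\varepsilon^X$: both commute with any bounded $T:X\to Y$ since they involve only the evaluations $f(z_{n,k}) \in X$, and both preserve $H^\infty_{\rm comp}$ because their values lie in the closed convex hull of $\{f(z_{n,k})\}$. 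I expect the main obstacle to be the sharpness of (1.7): one must carefully cancel the deterioration of $\delta(\zeta_\varepsilon) \sim N_\varepsilon r^{N_\varepsilon}$ supplied by (1.8) — which inflates Earl's bound as $\varepsilon \to 0$ — against the compensating $r^{N_\varepsilon}$-smallness of $B_{\zeta_\varepsilon}$ inside the disks, so that the interior contribution to $\|L_\varepsilon^X(f)\|_{H^\infty(\Di,X)}$ is genuinely $O(\varepsilon^d)\|f\|$ rather than $O(1)\|f\|$.
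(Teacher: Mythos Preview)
Your approach is fundamentally different from the paper's, and it has genuine gaps that I do not see how to close.

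The paper does \emph{not} build $L_\varepsilon^X$ as an interpolation operator. Instead it uses a $\bar\partial$-correction scheme: from a suitable partition of unity one gets a cutoff $\rho_V$ with $\bar\partial\rho_V = g/(1-|z|^2)$, $g$ supported in a quasi-interpolating collar away from $\bar K$; then
\[
L_\varepsilon^X(f)\;=\;B_{\zeta_\varepsilon}\Bigl(\rho_V f-\sum_j \tfrac{1}{B_{\zeta_{j,\varepsilon}}}\,L_{W_j}^X\bigl(B_{\zeta_{j,\varepsilon}}\chi_j g f\bigr)\Bigr),
\]
where $L_{W_j}^X$ are the Banach-valued $\bar\partial$-solution operators of the author's earlier paper. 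The Blaschke factor is small on $\mathrm{supp}(g)$ (Lemma~3.2), which is what makes the correction $O(\varepsilon^d)$; and $(1.7)$ then drops out because the $H^\infty(\Di,X)$ norm is computed on the Shilov boundary $\Gamma$, where $|B_{\zeta_\varepsilon}|\equiv 1$, so the leading term is exactly $\hat\rho_V\hat f|_\Gamma$ and contributes $|f|_{H^\infty(U,X)|_\Gamma}$.

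Your construction cannot produce $(1.7)$. The operator $f\mapsto\sum f(z_{n,k})\ell_{n,k}$ has norm at least of order $1/\delta(\zeta_\varepsilon)$, and by your own estimate $(1.8)$ this blows up like $\varepsilon^{-c}$ as $\varepsilon\to 0$. Your proposed dichotomy does not repair this: for $z$ outside $\bigcup_n D(z_n,2r)$ you assert that $L_\varepsilon^X(f)(z)$ equals, up to $O(\varepsilon^d)$, ``the continuous extension of $f$ evaluated at a nearest Shilov-boundary point'', but the operator only samples $f$ at the $z_{n,k}\in U$ and there is no mechanism whatsoever linking these samples to boundary values of $f$ (which need not even exist, since $\widehat U\cap\Gamma$ may be empty while $f\ne 0$). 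Inside the disks, the smallness of $|B_{\zeta_\varepsilon}|$ does not help either, because the sum $\sum f(z_{n,k})/[(z-z_{n,k})B_{\zeta_\varepsilon}'(z_{n,k})]$ has a genuine pole at each $z_{n,k}$ and no reason to be $O(1)$ nearby. In short, the deterioration of the Earl constant is real and is not cancelled.

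Your argument for $(1.6)$ is also incomplete. The quadrature identity $\tfrac1N\sum_k g(r\omega^k)\approx g(0)$ recovers $g(0)=f(z_n)$, the value at the \emph{center} of a cluster; it says nothing about $f(z)$ for $z\in K$, which lies far from every $z_n$. To pass from the cluster sums to a global Cauchy-type representation of $f(z)$ you would need the circles $\partial D(z_n,r)$ to be homologous in $U$ to a contour around $z$, which is not part of your setup. (Separately, for an infinite interpolating sequence the naive series $\sum B_{\zeta_\varepsilon}(z)/[(z-z_{n,k})B_{\zeta_\varepsilon}'(z_{n,k})]$ need not converge; Earl's and Jones's constructions are more delicate than this formula.)

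Finally, your reduction for part~(2) assumes Su\'arez's approximation of $1/B_{\zeta_\varepsilon}$ comes with the bound $\|\varphi_\varepsilon\|_\infty\lesssim 1/\inf_K|B_{\zeta_\varepsilon}|$; his theorem gives no such norm control. The paper instead uses holomorphic convexity to produce $f_1,\dots,f_k\in H^\infty$ with $|f_i|>r>1$ on $\mathrm{supp}(g)$ and sets $E_\varepsilon^X(f)=\rho_V f-\sum_i f_i^{n_\varepsilon}L_{U_i}^X(\chi_i g f/f_i^{n_\varepsilon})$, again a $\bar\partial$ construction with explicit constants.
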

\begin{R}\label{rem1.2}
{\rm A particular case of part (2) of the theorem for $f\in H_{\rm comp}^\infty(U,X)$  was established in \cite[Thm.\,1.7]{Br1}. 
}
\end{R}

\sect{Auxiliary results}
In this section, we collect some results used in the proof of Theorem \ref{te1.1}.
\subsect{Banach-valued $\bar\partial$ equations on $\Di$}
 A subset $S$ of  a metric space $(\mathcal M,d)$
 is said to be $\epsilon$-{\em separated} if $d(x,y)\ge\epsilon$ for all $x, y\in S$, $x\ne y$. A maximal $\epsilon$-separated subset of $\mathcal M$ is said to be an $\epsilon$-{\em chain}. Thus, if $S\subset \mathcal M$ is an $\epsilon$-chain, then $S$ is $\epsilon$-separated and for every $z\in \mathcal M\setminus S$ there is 
 $x\in S$ such that $d(z,x)<\epsilon$.
 Existence of $\epsilon$-chains  follows from the Zorn lemma.\smallskip

 A subset $S\subset\Di$ is said to be {\em quasi-interpolating}, if an
 $\epsilon$-chain of $S$, $\epsilon\in (0,1)$,  with respect to the pseudohyperbolic metric $\rho$, see \eqref{eq1.3}, is an interpolating sequence for $H^\infty$. (In fact, in this case every $\epsilon$-chain of $S$, $\epsilon\in (0,1)$, with respect to $\rho$ is an interpolating sequence for $H^\infty$, this easily follows from \cite[Ch.\,X,\,Cor.\,1.6, Ch.\,VII,\,Lm.\,5.3]{Ga}.)
 
Let $K\subset\Di$ be a Lebesgue measurable subset and $X$ be a complex Banach space. Two $X$-valued functions on $\Di$ are equivalent if they coincide a.e. on $\Di$. The complex Banach space
$L^\infty(K,X)$ consists of equivalence classes of Bochner measurable essentially bounded functions $f:\Di\to X$ equal $0$ a.e.\,\,on $\Di\setminus K$ equipped with norm $\|f\|_{L^\infty(K,X)}:={\rm ess}\sup_{z\in K}\|f(z)\|_X$. Also, we denote
 by $C_\rho(\Di,X)$  the Banach space of bounded continuous functions $f:\Di\rightarrow X$  uniformly continuous with respect to $\rho$ equipped with norm $\|f\|_{C_\rho(\Di,X)}:=\sup_{z\in \Di}\|f(z)\|_X$.

In \cite{Br2} we studied the differential equation
\begin{equation}\label{eq2.2}
\frac{\partial F}{\partial\bar z}=\frac{f(z)}{1-|z|^2},\qquad |z|<1,\quad f\in L^\infty(K,X).
\end{equation}
We proved that if $K$ is quasi-interpolating, then equation \eqref{eq2.2}
has a weak solution $F\in C_\rho(\Di,X)$, i.e., such that
for every $C^\infty$ function $s$ with compact support in $\Di$
\begin{equation}\label{eq2.3}
\iint \limits_{\Di}F(z)\cdot\frac{\partial s(z)}{\partial\bar{z}}\,dz\wedge d\bar{z}=-\iint\limits_{\Di}\frac{f(z)}{1-|z|^2}\,\cdot  s(z)\,dz\wedge d\bar{z},
\end{equation}
given by a bounded linear operator $L_K^X: L^\infty(K,X)\rightarrow  C_\rho(\Di,X)$.  
 Specifically, we obtained the following result.
 \begin{Th}[\mbox{\cite[Thm.\,1.1]{Br2}}]\label{te2.1}
Suppose a quasi-interpolating set $K\subset\Di$ is Lebesgue measurable and $\zeta=\{z_j\}$ is an $\epsilon$-chain of $K$, $\epsilon\in (0,1)$, with respect to $\rho$ such that 
\[
\delta(\zeta):=
\inf_k\prod_{j,\,j\ne k}\rho(z_j,z_k)\ge\delta>0.
\]
There is  a bounded linear operator $L_K^X: L^\infty(K,X)\to C_\rho(\Di,X)$ of norm 
\begin{equation}\label{eq2.4}
\|L_K^X\|\le \frac{c\epsilon}{1-\epsilon}\cdot\max\left\{1,\frac{\log\frac{1}{\delta}}{(1-\epsilon_*)^2}\right\},\quad \epsilon_*:=\max\left\{\frac 1 2,\epsilon\right\},
\end{equation}
for a numerical constant $c<5^2\cdot 10^6$
such that  for every $f\in L^\infty(K,X)$ the function $L_K^X f$ is a weak solution of equation \eqref{eq2.2}. \smallskip
 
The operator $L_K^X$ has the following properties:
\begin{itemize}
\item[(i)]
If $T: X\rightarrow Y$ is a bounded linear operator between complex Banach spaces, then 
\[
TL_K^X=L_K^Y T,
\]
where $(Tf)(z):=T(f(z))$, $z\in\Di$, $f:\Di\to X$;\smallskip
\item[(ii)]
If $f\in  L^\infty(K,X)$ has a compact essential range, then 
the range of $L_K^X f$ is relatively compact;\smallskip
\item[(iii)]
If $f\in L^\infty(X,K)$ is continuously differentiable on an open set $U\subset\Di$, then $L_K^X f$ is continuously differentiable on $U$. 
\end{itemize}
\end{Th}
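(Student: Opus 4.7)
The plan is to construct $L_K^X$ explicitly by combining local Cauchy--Green solutions with a holomorphic correction built from the interpolating structure of $\zeta$. The overall strategy is to decompose $f/(1-|z|^2)$ using a smooth partition of unity subordinate to a pseudohyperbolic cover of $K$ by disks centered at the points $z_j\in\zeta$, solve each localized $\bar\partial$ problem by an explicit integral kernel, and then sum the pieces after subtracting from each a carefully chosen $H^\infty$ function so that the total remains uniformly bounded on $\Di$.

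Concretely, since $\zeta=\{z_j\}$ is an $\epsilon$-chain of $K$, the disks $D(z_j,\epsilon)$ cover $K$, and I would fix a $\rho$-smooth partition of unity $\{\phi_j\}$ subordinate to slightly enlarged disks $D(z_j,\epsilon_*)$ with $(1-|z|^2)|\bar\partial\phi_j(z)|=O(1/(1-\epsilon_*))$ and bounded overlap. Setting $f_j:=\phi_j f$ (extended by $0$) and
\[
g_j(z):=\frac{1}{2\pi i}\iint_{\Di}\frac{f_j(\zeta)}{(1-|\zeta|^2)(\zeta-z)}\,d\zeta\wedge d\bar{\zeta},
\]
I obtain a Bochner-defined weak solution of $\bar\partial g_j=f_j/(1-|z|^2)$ on $\Di$ with values in $X$. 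The naive sum $\sum_j g_j$ need not be bounded, because the Cauchy kernel $1/(\zeta-z)$ decays too slowly to cooperate with the weight $1/(1-|z|^2)$.

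To remedy this I exploit the quasi-interpolating hypothesis. Since $\delta(\zeta)\ge\delta$, by the Carleson--Garnett refinement of the interpolation theorem there exist $\Phi_j\in H^\infty$ with $\Phi_j(z_k)=\delta_{jk}$ satisfying the sharper bound $\sum_j|\Phi_j(z)|\le c\log(1/\delta)/(1-\epsilon_*)^2$ on $\Di$. Using these as dual building blocks, I would subtract from each $g_j$ an $X$-valued holomorphic function $h_j$ that cancels the values of $g_j$ at the remaining nodes $z_k$, $k\ne j$, and set
\[
(L_K^X f)(z):=\sum_j\bigl(g_j(z)-h_j(z)\bigr).
\]
Holomorphy of $h_j$ preserves the identity $\bar\partial(g_j-h_j)=f_j/(1-|z|^2)$, so $L_K^X f$ is still a weak solution. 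The estimate \eqref{eq2.4} then splits naturally: the factor $\epsilon/(1-\epsilon)$ comes from the $\rho$-diameter of $\operatorname{supp}\phi_j$ controlling the local Cauchy--Green integral, while $\log(1/\delta)/(1-\epsilon_*)^2$ is inherited from the bound on $\sum|\Phi_j|$. Properties (i)--(iii) then fall out: (i) holds because the kernel is scalar-valued and every step is Bochner-linear in $f$; (ii) holds since each $g_j-h_j$ has relatively compact range when $f$ does, by dominated convergence on a compactly supported integrand; and (iii) holds since off $K$ the integrand vanishes, so the weak equation reduces to $\bar\partial=0$ there, while Cauchy-kernel regularity transfers smoothness of $f$ to $g_j$.

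The main obstacle will be the quantitative form of \eqref{eq2.4}, in particular pinning down the numerical constant $c<5^2\cdot 10^6$ and the precise shape of the factor $\max\{1,\log(1/\delta)/(1-\epsilon_*)^2\}$. This requires tight bookkeeping of three interacting quantities: the pointwise bound on $(1-|z|^2)|\bar\partial\phi_j|$, the $L^1$ norm of the Cauchy kernel restricted to $\operatorname{supp}\phi_j$ (which furnishes the $\epsilon/(1-\epsilon)$ factor), and the $H^\infty$ norms of the interpolating family $\{\Phi_j\}$. The generalization from scalar- to $X$-valued data is mechanical once the kernel is recognized as scalar and all integrals and sums are interpreted in the Bochner sense.
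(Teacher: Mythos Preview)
This statement is not proved in the present paper. Theorem~\ref{te2.1} is quoted verbatim from \cite[Thm.\,1.1]{Br2} as an auxiliary result in Section~3.1; the paper supplies no argument for it and simply invokes it in the constructions of Sections~4 and~5. Consequently there is no ``paper's own proof'' against which your proposal can be compared.

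As a separate matter, your sketch is a plausible outline of the kind of argument one expects for such a result---localize via a $\rho$-partition of unity on the $\epsilon$-chain, solve locally, and use the interpolating structure of $\zeta$ to glue---but several steps are underspecified. In particular, the ``holomorphic correction'' $h_j$ that you describe as cancelling the values of $g_j$ at the other nodes does not by itself force $\sum_j(g_j-h_j)$ to be bounded on $\Di$: matching a discrete set of values is much weaker than controlling the sup norm, and the Cauchy--Green pieces $g_j$ carry tails away from $\operatorname{supp}\phi_j$ that must be handled quantitatively, not just at the points $z_k$. The actual construction in \cite{Br2} (and in related work such as Jones' explicit $\bar\partial$ solutions) typically builds the solution operator from a kernel that already incorporates the Blaschke/interpolating structure, rather than doing plain Cauchy--Green followed by an interpolation-theoretic correction. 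If you want to pursue this direction, you should consult \cite{Br2} directly; the present paper will not help you fill the gaps.
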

\subsect{Structure of $\mathfrak M$}
For $m_1,m_2\in \mathfrak M$ the formula
\begin{equation}\label{eq2.5}
\rho(m_1,m_2):=\sup\bigl\{|\hat f(m_2)|\, :\, f\in H^\infty,\, \hat f(m_1)=0,\, \|f\|_{H^\infty}\le 1\bigr\}
\end{equation}
gives an extension of the pseudohyperbolic metric $\rho$  to $\mathfrak M\times\mathfrak M$. The extended function is lower semicontinuous on $\mathfrak M\times\mathfrak M$ (see \cite[Thm.\,6.2]{H}) and determines a metric on $\mathfrak M$ with the property that  any two open balls of radius $1$ are either equal or disjoint.  The {\em Gleason part} of $m\in \mathfrak M$ is then defined by $P(m):=\{m'\in \mathfrak M\, :\, \rho(m',m)<1\}$. Hoffman's classification of Gleason parts \cite{H} shows that there are only two cases: either $P(m)=\{m\}$ or $P(m)$ is an analytic disk. The former case means that there is a {\em parameterization} of $P(m)$, i.e., a continuous one-to-one and onto map $L:\Di\to P(m)$ such that  $\hat f\circ L\in H^\infty$ for every $f\in H^\infty$. 
By $\mathfrak M_a$  we denote the union of all non-trivial (analytic disks) Gleason parts of $\mathfrak M$. It is known that $\mathfrak M_a\subset \mathfrak M$ is open and $P(m)\subset \mathfrak M_a$ if and only if $m$ belongs to the closure of an interpolating sequence for $H^\infty$. 

According to Hoffman, the base of topology of $\mathfrak M_a$ consists of 
sets of the form $\{x \in\mathfrak M_a\, :\, |\hat b(x)| < \varepsilon\}$, where $b$ is an interpolating Blaschke product. This follows from the fact that for all sufficiently small $\varepsilon$ the set $b^{-1}(\Di_\varepsilon) \subset\Di$ is biholomorphic to $\Di_\varepsilon \times b^{-1}(0)$, see, e.g., \cite[Ch.\,X, Lm.\,1.4]{Ga}. Hence, $\{x \in\mathfrak M_a\, :\, |\hat b(x)| < \varepsilon\}$ is homeomorphic to $\Di_\varepsilon \times \hat b^{-1}(0)$. In turn, the base of topology of $\mathfrak M$ consists of 
sets of the form $\{x \in\mathfrak M\, :\, |\hat f(x)| < \varepsilon\}$, $f\in H^\infty$.
This follows from Su\'{a}rez's theorem \cite[Thm.\,2.4]{S0} which states that the algebra $H^\infty$ is {\em separating}, i.e., that if $K$ is a closed subset of $\mathfrak M$ and $x\in\mathfrak M\setminus K$, then there exists a function $g\in H^\infty$ such that $\hat g(x)\notin \hat g(K)$. An alternative proof of this fact can be easily obtained from Su\'arez's Runge-type approximation theorem \cite[Thm.\,3.3(I)]{S1} presented in section~1.1. Indeed, consider nonintersecting open neighbourhoods $O_1$ and $O_2$ of $K$ and $x$, respectively. Let $f$ be a continuous function on $O_1\sqcup O_2$ that is equal to $1$ on $O_1$ and equal to $0$ on $O_2$. Then  \cite[Thm.\,3.3(I)]{S1} implies that there exist $h\in H^\infty$ and an interpolating Blaschke product $b$ such that $\inf_{K\cup\{x\}}|\hat b|=:\delta>0$ and
\[
\sup_{K\cup\{x\}} |f-\hat h/\hat b|<\frac 14.
\]
This implies for $g:=h-\frac{\hat h(x)}{\hat b(x)}b$,
\[
\inf_K|\hat g|\geq \inf_K\left|\frac{\hat h}{\hat b}-\frac{\hat h(x)}{\hat b(x)}\right|\cdot\inf_K|\hat b|\ge \left(\inf_K\left|\frac{\hat h}{\hat b}\right|-\left|\frac{\hat h(x)}{\hat b(x)}\right|\right)\delta\ge \frac{\delta}{2},
\]
so that $0=\hat g(x)\notin \hat g(K)\subset\Co\setminus\Di_{\frac{\delta}{2}}$, as required.\smallskip

By $\mathfrak M_s:=\mathfrak M\setminus\mathfrak M_a$ we denote the set of trivial (one-point) Gleason parts. By virtue of Su\'{a}rez's result \cite[Thm.\,3.4]{S2}, the set $\mathfrak M_s$ is totally disconnected.
Using this fact, the following result was obtained in \cite{Br1} (for the proof see \cite[Prop.\,3.2,\,Lm.\,4.1]{Br1}).

\begin{Lm}\label{le2.2}
For a finite open cover of $\mathfrak M$, there exist an open finite refinement $(\widehat V_i)_{i\in I}$  and a smooth partition of unity $\{\rho_i\}_{i\in I}$ subordinate to the cover $(V_i)_{i\in I}$ of $\Di$, where $V_i:=\widehat V_i\cap\Di$, $i\in I$,
such that 
\begin{itemize}
\item[(a)]
\hspace*{1mm} All nonempty  $\widehat V_i\cap \widehat V_j$, $i\ne j$,
are relatively compact subsets of $\mathfrak M_a$;\smallskip
\item[(b)] \hspace*{1mm}
The closure of the support of $\rho_i$ in $\mathfrak M$ is contained in $\widehat V_i$, and 
\begin{equation}\label{eq2.6}
\frac{\partial\rho_i}{\partial\bar z}=\frac{g_{i}(z)}{1-|z|^2},\quad
z\in \mathbb{D},
\end{equation}
where $g_i \in C_\rho(\mathbb{D})\cap C^\infty(\Di)$, $i\in I$.
\end{itemize}
\end{Lm}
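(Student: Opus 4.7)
The plan is to build the refinement in two stages, separating the treatment of the totally disconnected set $\mathfrak M_s$ from that of the analytic part $\mathfrak M_a$, and then to engineer the partition of unity so that every $\rho_i$ is a smooth function of moduli of interpolating Blaschke products. The reason for this last constraint is that the growth condition in (b) forces $\partial_{\bar z}\rho_i$ to pick up exactly one factor of $(1-|z|^2)^{-1}$ with a $\rho$-continuous numerator, and this is precisely what Schwarz--Pick provides when the underlying function comes from $H^\infty$.

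By Su\'{a}rez's theorem, $\mathfrak M_s$ is compact and totally disconnected. I would partition $\mathfrak M_s$ into finitely many pairwise disjoint clopen (in $\mathfrak M_s$) pieces $C_1,\dots,C_k$, each contained in some member $U_{\sigma(i)}$ of the given cover $\mathcal U$. By normality of the compact Hausdorff space $\mathfrak M$ these extend to pairwise disjoint open sets $W_1,\dots,W_k\subset\mathfrak M$ with $C_i\subset W_i$ and $\overline{W_i}\subset U_{\sigma(i)}$. The complement $\mathfrak M\setminus\bigcup_iW_i$ is then a compact subset of $\mathfrak M_a$, which using Hoffman's local structure (quoted in section~2.2) can be covered by finitely many tubes $H_j=\{|\hat b_j|<\varepsilon_j\}$ for interpolating Blaschke products $b_j$, with $\varepsilon_j$ so small that $\overline{H_j}$ is a compact subset of $\mathfrak M_a$ homeomorphic to $\overline{\Di_{\varepsilon_j}}\times\hat b_j^{-1}(0)$, and $\overline{H_j}\subset U_{\tau(j)}\in\mathcal U$. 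The refinement $(\widehat V_i)_{i\in I}$ is then the union $(W_i)_i\sqcup(H_j)_j$.

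Property (a) is immediate: the $W_i$'s are pairwise disjoint, while any other nonempty intersection lies inside some $\overline{H_j}$, which is a compact subset of $\mathfrak M_a$. To build the partition of unity with property (b) I would form each $\rho_i$ from smooth cut-offs of moduli of interpolating Blaschke products: on $H_j$-pieces use $\chi(|b_j|/\varepsilon_j)$ directly, and on $W_i$-pieces first apply Su\'{a}rez's separating theorem for $H^\infty$ to produce Blaschke products whose moduli separate $C_i$ from $\bigsqcup_{l\neq i}C_l$ on $\mathfrak M_s$. After the standard renormalization $\psi_i/\sum_l\psi_l$, each $\rho_i$ is smooth on $\Di$, extends continuously to $\mathfrak M$, and is supported in $\widehat V_i$. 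The computation $\partial_{\bar z}\chi(|b|)=\chi'(|b|)\,b\overline{b'}/(2|b|)$ combined with Schwarz--Pick $(1-|z|^2)|b'(z)|\le 1-|b(z)|^2$ gives $\partial_{\bar z}\rho_i=g_i(z)/(1-|z|^2)$ with $g_i$ bounded and smooth on $\Di$; its $\rho$-uniform continuity follows because $|b|$ is $\rho$-nonexpansive and $\chi'$ is uniformly continuous.

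The main obstacle is (b). Generic smooth cut-offs on $V_i\subset\Di$ would produce $\bar\partial$-derivatives growing like $(1-|z|^2)^{-2}$ or worse near the unit circle, which is incompatible with the single-power rate $g_i/(1-|z|^2)$ demanded by the lemma. Confining every ingredient of the partition of unity to functions of interpolating Blaschke products --- chosen via Hoffman's theorem for the $\mathfrak M_a$-pieces and via Su\'{a}rez's separating theorem for the $\mathfrak M_s$-pieces --- is what makes the one-power rate achievable, and producing enough such Blaschke products to realize the required separation of clopen pieces of $\mathfrak M_s$ while keeping their supports inside the prescribed cover elements is the technical crux of the construction.
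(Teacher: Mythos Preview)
The paper does not prove this lemma; it quotes the result from \cite[Prop.\,3.2,\,Lm.\,4.1]{Br1} and moves on. Your outline reconstructs what is essentially the expected argument there: partition the totally disconnected set $\mathfrak M_s$ into clopen pieces subordinate to the given cover, thicken them to pairwise disjoint open sets $W_i\subset\mathfrak M$, cover the remaining compact subset of $\mathfrak M_a$ by Hoffman tubes $\{|\hat b_j|<\varepsilon_j\}$, and assemble the partition of unity from smooth functions of moduli of $H^\infty$ functions so that Schwarz--Pick produces exactly the single factor $(1-|z|^2)^{-1}$ in $\bar\partial\rho_i$. That is the right skeleton.

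Two points should be sharpened before the sketch becomes a proof. First, Su\'arez's separating theorem yields a function $g\in H^\infty$ with $\hat g(x)\notin\hat g(K)$ for a \emph{point} $x$ versus a closed set $K$; it does not hand you a Blaschke product, nor does it separate two compact sets by modulus. What you actually need for the $W_i$-bumps is the base-of-topology statement quoted just before the lemma: each compact $C_i$ lying in the open $W_i$ is covered by finitely many sets $\{|\hat f_{i,m}|<\varepsilon_{i,m}\}\subset W_i$ with $f_{i,m}\in H^\infty$, and the $W_i$-cutoff is then built from the finite family $\chi(|f_{i,m}|/\varepsilon_{i,m})$ (Schwarz--Pick applies to any $f\in H^\infty$ with $\|f\|\le 1$, not only to Blaschke products). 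Second, your justification of $g_i\in C_\rho(\Di)$ is incomplete: $g_i$ involves not only $\chi'(|b|)$ but also the hyperbolic derivative $(1-|z|^2)\,\overline{b'(z)}$, and its $\rho$-uniform continuity requires a Cauchy-estimate argument in pseudohyperbolic disks rather than merely the $\rho$-nonexpansiveness of $|b|$. With these two fixes the approach is sound and agrees with the cited construction.
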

In addition, we have
\begin{C}\label{cor2.3}
Every function $\rho_i$ in the lemma extends to a continuous function $\hat\rho_i$ on $\mathfrak M$ with support in $\widehat V_i$. 
\end{C}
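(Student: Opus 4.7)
The plan is to unpack the construction of the smooth partition of unity $\{\rho_i\}$ from the proof of Lemma \ref{le2.2} (carried out in \cite[Prop.\,3.2,\,Lm.\,4.1]{Br1}) and observe that each $\rho_i$ is by design the restriction to $\Di$ of a function built from Gelfand transforms of $H^\infty$ elements, which therefore extends continuously to $\mathfrak M$ for free. The crucial input is that the Gelfand topology of $\mathfrak M$ has a base consisting of sets $\{x\in\mathfrak M\,:\,|\hat f(x)|<\varepsilon\}$ with $f\in H^\infty$ (cf.~section~2.2): this allows the refinement $(\widehat V_i)$ to be chosen so that each $\widehat V_i$ is expressed in terms of sublevel sets of Gelfand transforms of interpolating Blaschke products $b_{i,j}$, together with a clopen contribution from the totally disconnected set $\mathfrak M_s$ (via Su\'arez's \cite[Thm.\,3.4]{S2}).

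Given such a cover, one produces $\rho_i$ in the explicit form
\[
\rho_i(z)=\Phi_i\bigl(b_{i,1}(z),\overline{b_{i,1}(z)},\dots,b_{i,N_i}(z),\overline{b_{i,N_i}(z)}\bigr),\qquad z\in\Di,
\]
with $\Phi_i:\Co^{2N_i}\to[0,1]$ smooth and compactly supported. The natural extension is then defined by
\[
\hat\rho_i(x):=\Phi_i\bigl(\hat b_{i,1}(x),\overline{\hat b_{i,1}(x)},\dots,\hat b_{i,N_i}(x),\overline{\hat b_{i,N_i}(x)}\bigr),\qquad x\in\mathfrak M,
\]
which is continuous on $\mathfrak M$ (each $\hat b_{i,j}\in C(\mathfrak M)$ and $\Phi_i$ is continuous) and has support inside $\widehat V_i$ by the compact support of $\Phi_i$ combined with the definition of $\widehat V_i$. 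Uniqueness of the extension is automatic because $\Di$ is dense in $\mathfrak M$.

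The main technical obstacle is the treatment of the $\mathfrak M_s$-component of the cover. Since trivial Gleason parts lack analytic disk structure, the corresponding piece of $\rho_i$ must be obtained by smoothing the characteristic function of a clopen neighbourhood of $\widehat V_i\cap\mathfrak M_s$ (lifted to $\mathfrak M$ via the separation property of $H^\infty$ from \cite[Thm.\,2.4]{S0}) along a thin pseudohyperbolic collar inside $\Di$, so that the resulting $\rho_i$ still satisfies the $\bar\partial$-equation in Lemma \ref{le2.2}(b). The strict containment $\overline{\mathrm{supp}(\rho_i)}\subset\widehat V_i$ from Lemma \ref{le2.2}(b) guarantees that the smoothing collar can be chosen thin enough for the support of the natural extension $\hat\rho_i$ to remain inside $\widehat V_i$, which completes the argument.
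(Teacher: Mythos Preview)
Your approach is genuinely different from the paper's and, as written, incomplete. You attempt to trace back through the explicit construction of $\{\rho_i\}$ in \cite{Br1} and argue that each $\rho_i$ is, by design, the restriction to $\Di$ of a continuous function on $\mathfrak M$ (namely a smooth function of finitely many Gelfand transforms). The paper instead gives an \emph{intrinsic} argument using only properties (a) and (b) of Lemma~\ref{le2.2}: since the support of $g_i$ lies in the overlap region $\bigcup_{j\ne i}\widehat V_i\cap\widehat V_j\subset\mathfrak M_a$, Proposition~\ref{prop2.4} makes $\mathrm{supp}(g_i)$ quasi-interpolating, so Theorem~\ref{te2.1} (via \cite[Cor.~1.5]{Br2}) yields a solution $F_i\in C_\rho(\Di)$ of the $\bar\partial$-equation~\eqref{eq2.6} that extends continuously to $\mathfrak M$; then $h_i:=\rho_i-F_i\in H^\infty$, and $\hat\rho_i:=\hat F_i+\hat h_i\in C(\mathfrak M)$ is the required extension.

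The paper's argument is shorter and more robust: it works for \emph{any} partition of unity satisfying (a) and (b), never opening the black box of \cite{Br1}, and it uses precisely the $\bar\partial$-machinery that the paper has already set up. Your route, by contrast, requires verifying that the specific $\rho_i$ produced in \cite[Prop.~3.2, Lm.~4.1]{Br1} really do have the form $\Phi_i(b_{i,1},\bar b_{i,1},\dots)$ with smooth compactly supported $\Phi_i$---a claim you assert but do not check---and your handling of the $\mathfrak M_s$-contribution is a sketch rather than an argument: the phrase ``smoothing the characteristic function of a clopen neighbourhood\dots along a thin pseudohyperbolic collar'' is not obviously compatible with your $\Phi_i$-ansatz, since characteristic functions of clopen subsets of $\mathfrak M$ need not be smooth functions of finitely many Gelfand transforms. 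If the construction in \cite{Br1} does have the structure you posit, your method would succeed and would be somewhat more elementary in that it avoids the $\bar\partial$-solution operator; but as it stands it rests on unverified structural facts about an external reference.
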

\noindent Hence,  $\{\hat \rho_i\}_{i\in I}$ 
is a continuous partition of unity subordinate to the cover $(\widehat V_i)_{i\in I}$ of $\mathfrak M$.
\begin{proof}
According to (a), the closure of the support of $\rho_i$, $\overline{{\rm supp}(\rho_i)}$, is a subset of the set $\cup_{j\in I} \widehat V_i\cap \widehat V_j$, which according to (b) is a subset of $\mathfrak M_a$. Thus, by Proposition \ref{prop2.4} proved in section~2.3 below, ${\rm supp}(\rho_i)$ is a quasi-interpolating set. In particular, by virtue of \cite[Cor.\,1.5]{Br2} (a corollary of Theorem \ref{te2.1}), equation \eqref{eq2.6} has a smooth solution $F_i\in C_\rho(\Di)\, (:=C_\rho(\Di,\Co))$ which extends to a continuous function $\hat F_i\in C(\mathfrak M)$. Since $\rho_i$ is also a bounded solution of \eqref{eq2.6}, $h_i:=\rho_i-F_i\in H^\infty$. Thus, $h_i$ has an extension $\hat h_i\in C(\mathfrak M)$ and therefore $\hat\rho_i:=\hat F_i+\hat h_i\in C(\mathfrak M)$ is a continuous extension of $\rho_i$.
\end{proof}

\subsect{Characterization of quasi-interpolating sets}
We require the following result.
\begin{Prop}\label{prop2.4}
A subset of $\Di$ is quasi-interpolating if and only if its closure in $\mathfrak M$ belongs to $\mathfrak M_a$. 
\end{Prop}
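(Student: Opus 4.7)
The plan is to deduce both directions from two classical facts recalled in section~2.2: Hoffman's characterization that $P(m)\subset\mathfrak M_a$ if and only if $m$ lies in the closure of an interpolating sequence for $H^\infty$, together with the lower semi-continuity on $\mathfrak M\times\mathfrak M$ of the extended pseudohyperbolic metric $\rho$.

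For the forward direction, I would assume $S$ is quasi-interpolating and fix, via Zorn's lemma, an $\epsilon$-chain $\zeta\subset S$ for some $\epsilon\in(0,1)$; by hypothesis $\zeta$ is interpolating. Given $m\in\bar S$, pick a net $(s_\alpha)\subset S$ with $s_\alpha\to m$ in the Gelfand topology, and for each $\alpha$ use maximality of $\zeta$ to choose $z_\alpha\in\zeta$ with $\rho(s_\alpha,z_\alpha)<\epsilon$ (taking $z_\alpha=s_\alpha$ when $s_\alpha\in\zeta$). By compactness of $\bar\zeta\subset\mathfrak M$, pass to a subnet along which $z_\alpha\to m'\in\bar\zeta$. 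Lower semi-continuity of $\rho$ then yields
\[
\rho(m,m')\le\liminf_\alpha\rho(s_\alpha,z_\alpha)\le\epsilon<1,
\]
so $m$ and $m'$ lie in the same Gleason part. Since $\zeta$ is interpolating, the characterization from section~2.2 gives $m'\in\bar\zeta\subset\mathfrak M_a$, and because $\mathfrak M_a$ is a union of Gleason parts, it follows that $m\in\mathfrak M_a$.

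For the reverse direction, I would assume $\bar S\subset\mathfrak M_a$ and let $\zeta\subset S$ be any $\epsilon$-chain with $\epsilon\in(0,1)$. Then $\zeta$ is pseudohyperbolically $\epsilon$-separated and $\bar\zeta\subset\bar S\subset\mathfrak M_a$. The required input is the converse half of the $\mathfrak M_a$-characterization of interpolating sequences: a pseudohyperbolically separated sequence whose $\mathfrak M$-closure lies in $\mathfrak M_a$ is interpolating for $H^\infty$. This is precisely what is obtained from the combination of \cite[Ch.\,X,\,Cor.\,1.6]{Ga} with \cite[Ch.\,VII,\,Lm.\,5.3]{Ga} already invoked in the definition of quasi-interpolating. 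Thus $\zeta$ is interpolating, so $S$ is quasi-interpolating.

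The hard step will be the reverse direction, where producing the quantitative Carleson-type condition $\delta(\zeta)>0$ out of the purely topological hypothesis $\bar\zeta\subset\mathfrak M_a$ requires the nontrivial Hoffman/Garnett machinery just cited; by contrast, the forward direction is essentially formal once lower semi-continuity of $\rho$ is combined with the observation that Gleason equivalence transfers nontriviality of Gleason parts from $m'$ to $m$.
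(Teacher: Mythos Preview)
Your forward direction (quasi-interpolating $\Rightarrow$ closure in $\mathfrak M_a$) is essentially the paper's argument: pick an $\epsilon$-chain $\zeta$, take a net in $S$ converging to $m$, choose nearby points of $\zeta$, pass to a subnet converging to some $m'\in\bar\zeta\subset\mathfrak M_a$, and use lower semi-continuity of $\rho$ to get $\rho(m,m')\le\epsilon<1$.

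For the reverse direction your approach differs from the paper's: you black-box the statement ``a pseudohyperbolically separated sequence whose closure lies in $\mathfrak M_a$ is interpolating,'' whereas the paper proves exactly this from scratch. For each limit point $x\in\bar\zeta\subset\mathfrak M_a$ the paper uses Hoffman's theorem to pick an interpolating sequence accumulating at $x$, then applies Lemma~\ref{lem2.5} (i.e., \cite[Ch.\,X,\,Lm.\,1.4; Ch.\,VII,\,Lm.\,5.3]{Ga}) to obtain a neighbourhood $W_x$ that is a disjoint union of pseudohyperbolic domains of diameter $<\epsilon$, each meeting $\zeta$ in at most one point, with any selection of one point per domain interpolating; compactness gives a finite cover, and \cite[Ch.\,VII,\,Problem~2]{Ga} (a separated finite union of interpolating sequences is interpolating) finishes.

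Your citation for the black-boxed step is inaccurate: the references \cite[Ch.\,X,\,Cor.\,1.6]{Ga} and \cite[Ch.\,VII,\,Lm.\,5.3]{Ga} are invoked in the definition of quasi-interpolating only to show that if \emph{one} $\epsilon$-chain of $S$ is interpolating then \emph{every} $\epsilon$-chain is --- not the characterization you need. The result you want is indeed part of Hoffman's theory, but it does not follow from those two lemmas alone; you would have to point to the right place in \cite{H} or \cite[Ch.\,X]{Ga}, or reproduce the paper's argument.
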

\begin{proof}
Let $K\subset\Di$ be such that $\bar K\subset\mathfrak M_a$. Fix $\epsilon>0$ and let $\zeta=\{z_n\}\subset K$ be an $\epsilon$-chain with respect to $\rho$. We have to prove that $\zeta$ is an interpolating sequence for $H^\infty$. 

Let $x\in\mathfrak M_a$ be a limit point of $\zeta$. By Hoffman's theorem, see, e.g., \cite[Ch.\,X]{Ga}, there exists an interpolating sequence $\zeta_x\subset\Di$ such that $x$ is a limit point of $\zeta_x$. Let $B_{\zeta_x}$ be the interpolating Blaschke product with zeros at $\zeta_x$. Then $\hat{B}_{\zeta_x}(x)=0$. Next, according to \cite[Ch.\,X,\,Lm.\,1.4,\,Ch.\,VII,\,Thm.\,5.1]{Ga} (see Lemma \ref{lem2.5} in section~2.4 below), for a sufficiently small $c>0$ the set $\widehat W_{x}(c):=\{m\in\mathfrak M\, :\, |\hat B_{\zeta_x}(m)|<c\}$ is an open neighbourhood of $x$ in $\mathfrak M_a$ such that $W_x:=\widehat W_x\cap\Di$ is the disjoint union of open subsets $V_n$, $n=1,\dots$, and $b_x$ maps each $V_n$ biholomorphically onto the open disk $\Di_c$. Moreover, the pseudohyperbolic diameter $d_\rho(V_n)$ of each $V_n$ is less than $\epsilon$ and every sequence $\{w_n\}\subset\Di$ such that $w_n\in V_n$, $n=1,\dots$, is interpolating for $H^\infty$. Since $x$ is also a limit point of $\zeta$, $W_x\cap\zeta\ne\emptyset$. Since $d_\rho(V_n)<\epsilon$ and $\rho(z_i,z_j)\ge\epsilon$ for all $i\ne j$ (because $\zeta$ is an $\epsilon$-chain), each $V_n$ contains at most one point of $\zeta$. Thus, $W_x\cap\zeta$ is an interpolating sequence for $H^\infty$. Using compactness of $\bar\zeta$ we find finitely many points $x_1,\dots, x_k\in\bar\zeta$ such that the corresponding sets $\zeta_i:=W_{x_i}\cap\zeta$, $1\le i\le k$, cover $\zeta$. Since each $\zeta_i$ is an interpolating sequence for $H^\infty$  and $\rho(z_i,z_j)\ge\epsilon$ for all $i\ne j$, the sequence $\zeta$ is also interpolating for $H^\infty$, see, e.g.,
\cite[Ch.\,VII,\,Problem 2]{Ga}.

Conversely, suppose that the set $K\subset\Di$ is quasi-interpolating. Let $\epsilon\in (0,1)$ and let $\zeta$ be an $\epsilon$-chain of $K$ with respect to $\rho$ which is an interpolating sequence for $H^\infty$.
Let $x\in\bar K$ and 
$(x_\alpha)\subset K$ be a net converging to $x$ in $\mathfrak M$. Then for each $x_\alpha$ there exists $z_\alpha\in\zeta$ such that $\rho(x_\alpha,z_\alpha)<\epsilon$. Passing to a subnet of $(z_\alpha)$, if necessary, without loss of generality we can assume that $(z_\alpha)$ converges to some $m\in\mathfrak M_a$. Then by the lower semicontinuity of $\rho$ on $\mathfrak M\times\mathfrak M$, see \eqref{eq2.5}, we obtain 
\[
\rho(x,m)\le \liminf_\alpha\rho(x_\alpha,z_\alpha)\le\epsilon<1.
\]
Hence $y\in P(m)\subset\mathfrak M_a$. This shows that $\bar K\subset\mathfrak M_a$ and completes the proof of the proposition.
\end{proof}
\subsect{}
We also use the following results.
 \begin{Lm}[\mbox{\cite[Ch.\,X,\,Lm.\,1.4,\, Ch.\,VII,\,Lm.\,5.3]{Ga}}]\label{lem2.5}
Let $B_\zeta$ be the interpolating Blaschke product with zeros $\zeta=\{z_n\}$  such that
\[
\delta(\zeta)=\inf_n\, (1-|z_n|^2)|B_\zeta'(z_n)|\ge\delta>0.
\]
Suppose $\lambda\in (0,1)$ and $r:=r(\lambda)\in (0,1)$ satisfy
\[
\frac{2\lambda}{1+\lambda^2}<\delta\quad{\rm and}\quad r=\frac{\delta-\lambda}{1-\lambda\delta}\lambda.
\]
Then 
\begin{itemize}
\item[(i)]
$B_\zeta^{-1}(\Di_r)=\{z\in\Co\, :\, |B_\zeta(z)|<r\}$ is the union of pairwise disjoint domains $V_{\zeta,n}\,(\ni z_n)$ such that 
\[
V_{\zeta,n}\subset D(z_n,\lambda);\smallskip
\]
\item[(ii)]
$B_\zeta$ maps every $V_{\zeta, n}$ biholomorpically onto $\Di_r$;\smallskip
\item[(iii)] 
Every sequence $\omega=\{ w_n\}$ with $w_n\in D(z_n,\lambda)$ for all $n$,
is  interpolating for $H^\infty$ and
 \[
\delta(\omega) \ge\frac{\delta-\frac{2\lambda}{1+\lambda^2}}{1-\frac{2\delta \lambda}{1+\lambda^2}}.
\]
\end{itemize}
\end{Lm}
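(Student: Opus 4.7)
\smallskip\noindent\textbf{Proof plan.}
I would split the argument into (i)--(ii), which rest on a Schwarz--Pick estimate followed by a maximum principle component analysis, and (iii), which rests on an iterated pseudohyperbolic triangle inequality together with a product identity.

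\smallskip

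\emph{Parts (i) and (ii).} Factor $B_\zeta=b_{z_n}\cdot\tilde B_n$, where $b_{z_n}$ is the single Blaschke factor at $z_n$ (so $|b_{z_n}(z)|=\rho(z,z_n)$) and $\tilde B_n$ is the Blaschke product with zero set $\zeta\setminus\{z_n\}$. Differentiation at $z_n$ gives $|\tilde B_n(z_n)|=(1-|z_n|^2)|B_\zeta'(z_n)|\ge\delta$. Applying Schwarz--Pick to $\tilde B_n\colon\Di\to\Di$ and rearranging the pseudohyperbolic triangle inequality $\rho(a,c)\le(\rho(a,b)+\rho(b,c))/(1+\rho(a,b)\rho(b,c))$ (with $a=\tilde B_n(z_n)$, $b=\tilde B_n(z)$, $c=0$) gives, by monotonicity of the resulting expression,
\[
|\tilde B_n(z)|\ge\frac{|\tilde B_n(z_n)|-\rho(z,z_n)}{1-|\tilde B_n(z_n)|\rho(z,z_n)}\ge\frac{\delta-\lambda}{1-\delta\lambda}\qquad\bigl(z\in\overline{D(z_n,\lambda)}\bigr).
\]
Hence $|B_\zeta(z)|=\rho(z,z_n)|\tilde B_n(z)|\ge r$ on $\partial D(z_n,\lambda)$. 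The same triangle inequality combined with $\rho(z_n,z_m)\ge\delta$ (which follows from $\delta(\zeta)\ge\delta$ since each factor in the product defining $\delta(\zeta)$ is at most $1$) and the hypothesis $2\lambda/(1+\lambda^2)<\delta$ forces the disks $D(z_n,\lambda)$ to be pairwise disjoint. Therefore the connected component $V_{\zeta,n}$ of $B_\zeta^{-1}(\Di_r)$ containing $z_n$ lies in $D(z_n,\lambda)$, and distinct components are automatically separated. The maximum modulus principle applied to $1/B_\zeta$ rules out any component having no zero of $B_\zeta$, completing (i). For (ii), $B_\zeta\colon V_{\zeta,n}\to\Di_r$ is a proper holomorphic map with a single simple zero, hence of degree one, hence biholomorphic.

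\smallskip

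\emph{Part (iii).} Given $w_j\in D(z_j,\lambda)$, two iterations of the same triangle inequality yield
\[
\rho(w_j,w_k)\ge\Phi_\mu(\rho(z_j,z_k)),\qquad \Phi_\mu(x):=\frac{x-\mu}{1-\mu x},\qquad \mu:=\frac{2\lambda}{1+\lambda^2}.
\]
The algebraic heart of the argument is the \emph{product inequality} $\Phi_\mu(a)\Phi_\mu(b)\ge\Phi_\mu(ab)$ for $a,b\in[0,1)$, which a direct expansion exhibits as
\[
\Phi_\mu(a)\Phi_\mu(b)-\Phi_\mu(ab)=\frac{\mu(1+\mu)(1-a)(1-b)(1-ab)}{(1-\mu a)(1-\mu b)(1-\mu ab)}\ge 0.
\]
By induction, and a limiting argument for infinite products, this extends to $\prod_j\Phi_\mu(a_j)\ge\Phi_\mu(\prod_j a_j)$. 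Applying this with $a_j=\rho(z_j,z_k)$ and taking the infimum over $k$:
\[
\delta(\omega)=\inf_k\prod_{j\ne k}\rho(w_j,w_k)\ge\inf_k\Phi_\mu\!\Bigl(\prod_{j\ne k}\rho(z_j,z_k)\Bigr)\ge\Phi_\mu(\delta(\zeta))\ge\Phi_\mu(\delta),
\]
which is precisely the stated bound. Since $\delta>\mu$ by hypothesis, the right side is strictly positive, so $\omega$ is interpolating for $H^\infty$ by Carleson's theorem.

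\smallskip

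The principal obstacle is the product inequality for $\Phi_\mu$, since the factorwise bound $\rho(w_j,w_k)\ge\Phi_\mu(\rho(z_j,z_k))$ does not, a priori, respect multiplication. The clean factorization $\mu(1+\mu)(1-a)(1-b)(1-ab)$ of the numerator of $\Phi_\mu(a)\Phi_\mu(b)-\Phi_\mu(ab)$ is the decisive algebraic identity enabling the reduction to the single scalar bound $\Phi_\mu(\delta)$.
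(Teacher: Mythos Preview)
The paper does not supply its own proof of this lemma; it is quoted verbatim from Garnett (Ch.~X, Lemma~1.4 for (i)--(ii) and Ch.~VII, Lemma~5.3 for (iii)). Your argument reproduces the standard proofs found there. In particular, your treatment of (iii) is exactly Garnett's: the factorwise bound $\rho(w_j,w_k)\ge\Phi_\mu(\rho(z_j,z_k))$ via two applications of the strong triangle inequality, followed by the product inequality $\Phi_\mu(a)\Phi_\mu(b)\ge\Phi_\mu(ab)$, whose clean factorisation you display correctly.

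One point in your proof of (i) is slightly underspecified. The maximum-principle argument for $1/B_\zeta$ disposes of a zero-free component $V$ only once you know $\overline V\subset\Di$; a priori such a component could accumulate on $\partial\Di$, and there $|B_\zeta|$ has no uniform lower bound along tangential approach, so the maximum principle does not apply directly. What actually rules this out is the stronger containment $\{|B_\zeta|<r\}\subset\bigcup_n D(z_n,\lambda)$, which requires a global lower bound on $|B_\zeta(z)|$ when $\rho(z,z_n)\ge\lambda$ for every $n$. This is where the interpolating hypothesis is used in full force (e.g.\ through the Hoffman-type estimate relating $|B_\zeta(z)|$ to $\rho(z,\zeta)$, or equivalently via the bound $1-|B_\zeta(z)|^2\le\sum_n(1-\rho(z,z_n)^2)$ combined with the Carleson-measure control that interpolation gives on the latter sum). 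Once that containment is established, your Rouch\'e/degree-one argument on each $D(z_n,\lambda)$ shows that the intersection with each disk is exactly the single component $V_{\zeta,n}$, and (i)--(ii) follow. So the lacuna is minor and standard to fill, but the bare appeal to the maximum principle is not quite enough as written.
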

By $b_{\zeta, n} :\Di_r\to V_{\zeta, n}$ we denote the (holomorphic) inverse of $B_\zeta|_{V_n}$.
Since $B_\zeta(z_n)=0$ and $\|B_\zeta\|_{H^\infty}=1$,  by the Schwarz-Pick theorem $D(z_n,r)\subset b_{\zeta, n}(\Di_r)\subset V_{\zeta, n}\, (\subset D(z_n,\lambda))$.
\begin{Lm}[\mbox{\cite[Ch.\,X,\,Cor.\,1.6]{Ga}}]\label{lem2.6}
Every interpolating sequence $\zeta$ containing at least two elements can be represented as the disjoint union of interpolating sequences $\zeta_1$ and $\zeta_2$ such that
\[
\delta(\zeta_j)\ge\sqrt{\delta(\zeta)},\quad j=1,2.
\]
\end{Lm}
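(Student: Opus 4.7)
My plan is to build the partition $\zeta=\zeta_1\sqcup\zeta_2$ by a combinatorial argument on the Carleson product $P_k:=\prod_{j\ne k}\rho(z_j,z_k)\ge\delta:=\delta(\zeta)$. The key observation is that for each $z_k\in\zeta$ there can be at most one other $z_j$ with $\rho(z_j,z_k)<\sqrt{\delta}$: if two such $z_{j_1},z_{j_2}$ existed, the product of these two factors together with all remaining factors (each $\le 1$) would force $P_k<\delta$, contradicting the Carleson condition. Consequently, the ``close-pair'' graph $G$ on $\zeta$, with edges $\{z_i,z_j\}$ iff $\rho(z_i,z_j)<\sqrt{\delta}$, has maximum degree $1$; it is a (partial) matching, and hence bipartite.

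I would then take a proper $2$-coloring $\zeta=\zeta_1\sqcup\zeta_2$ of $G$, forcing every pair of close partners to lie in different classes. For $z_k\in\zeta_i$ having a close partner $z_{k^*}\in\zeta_{3-i}$, the required estimate drops out of a factor count: since $\zeta_i\setminus\{z_k\}\subseteq\{j:j\ne k,k^*\}$ and $\rho\le 1$,
\[
\prod_{z_j\in\zeta_i,\,j\ne k}\rho(z_j,z_k)\;\ge\;\prod_{j\ne k,\,j\ne k^*}\rho(z_j,z_k)\;=\;\frac{P_k}{\rho(z_{k^*},z_k)}\;>\;\frac{\delta}{\sqrt{\delta}}\;=\;\sqrt{\delta}.
\]
So matched vertices of $G$ are handled directly from Carleson plus the edge bound $\rho(z_{k^*},z_k)<\sqrt{\delta}$.

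The main obstacle is the case of a vertex $z_k$ isolated in $G$: every factor $\rho(z_j,z_k)$ is already $\ge\sqrt{\delta}$, yet the trivial bound $\prod_{z_j\in\zeta_i,\,j\ne k}\rho(z_j,z_k)\ge P_k$ only gives $\delta$, not $\sqrt{\delta}$. To close this gap I would exploit the remaining coloring freedom on isolated components of~$G$: assign such vertices (greedily, or by a carefully ordered iterative procedure) so that for each isolated $z_k$ the log-sum $\sum_{z_j\in\zeta_i,\,j\ne k}\log\rho(z_j,z_k)$ stays above $\tfrac12\log\delta$, i.e., so that the ``cross-product'' $\prod_{z_j\in\zeta_{3-i}}\rho(z_j,z_k)\le\sqrt{\delta}$. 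The cleanest route is to first prove the statement for finite $\zeta$ by induction on $|\zeta|$ (the base $|\zeta|=2$ being trivial since singletons have $\delta=1$), and then pass to an arbitrary interpolating sequence by Cantor diagonal extraction along an exhaustion of $\zeta$ by finite subsequences, using Lemma~\ref{lem2.5} to control the local geometry in the pseudohyperbolic disks $D(z_k,\lambda)$ around each isolated vertex.
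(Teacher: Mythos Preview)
The paper does not prove this lemma; it is quoted from \cite[Ch.\,X,\,Cor.\,1.6]{Ga} and used as a black box in Section~4.1. Your close-pair observation is correct, and for matched vertices the $2$-colouring of the resulting partial matching gives the bound cleanly. The gap is exactly where you flag it, and neither proposed fix closes it. A greedy colouring of isolated vertices cannot be made to work locally: assigning a colour to $z_k$ changes the same-colour Carleson product at \emph{every} other vertex simultaneously, and there is no monovariant that a greedy step controls. The induction on $|\zeta|$ fails for the same structural reason: after removing $z_{n+1}$, splitting $\zeta'=\zeta_1'\sqcup\zeta_2'$ with $\delta(\zeta_j')\ge\sqrt{\delta(\zeta')}$ by the inductive hypothesis, and reinserting $z_{n+1}$ into $\zeta_1'$, each $z_k\in\zeta_1'$ picks up an extra factor $\rho(z_{n+1},z_k)\le 1$; even in the favourable case $\rho(z_{n+1},z_k)\ge\sqrt{\delta}$ this yields only
\[
\prod_{j\in\zeta_1,\,j\ne k}\rho(z_j,z_k)\;\ge\;\sqrt{\delta}\cdot\sqrt{\delta(\zeta')}\;\ge\;\delta,
\]
which is short of the required $\sqrt{\delta}$ when $\delta<1$.

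Your closing reference to Lemma~\ref{lem2.5} is actually pointing at the right tool, but you have not said how to use it. Garnett's argument proceeds globally through the level-set structure of $B_\zeta$ described there (his Lemma~X.1.4): the components of $\{|B_\zeta|<\lambda\}$ and the way they coalesce as $\lambda$ increases organise the $z_n$ combinatorially, and the split $\zeta_1\sqcup\zeta_2$ is read off from that organisation rather than from a pairwise distance threshold. In that framework the isolated-vertex difficulty does not arise. I would recommend reading Garnett's proof directly rather than trying to repair the greedy/inductive route.
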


\sect{Proof of Theorem \ref{te1.1}(1)}
\subsect{Construction of linear approximating operators} We set $\widehat U':=\mathfrak M\setminus \bar K$.
Applying Lemma \ref{le2.2} to the open cover
$(\widehat U,\widehat U')$ of $\mathfrak M$,
 we find an open finite refinement  $(\widehat V_i)_{i\in I}$ of the cover and a smooth partition of unity $\{\rho_i\}_{i\in I}$ subordinate to the open cover $(V_i)_{i\in I}$ of $\Di$, where $V_i:=\widehat V_i\cap\Di$, $i\in I$, satisfying conditions (a) and (b) of the lemma. Let $I_1\subset I$ be the subset of indices such that $V_i\subset U$ for all $i\in I_1$ and $I_2:=I\setminus I_1$. We set
\[
\widehat V:=\bigcup_{i\in I_1}\widehat V_i,\qquad \widehat V':=\bigcup_{i\in I_2} \widehat V_i,\qquad V=\widehat V\cap\Di,\qquad V'=\widehat V'\cap\Di,
\] 
and 
\[
\rho_{ V}:=\sum_{i\in I_1}\rho_i,\qquad \rho_{ V'}:=\sum_{i\in I_2}\rho_i.
\]
Then $(\widehat V,\widehat V')$ is an open cover of $\mathfrak M$ such that $\bar K\subset\widehat V$, and $\{\rho_{ V},\rho_{ V'}\}$ is a partition of unity subordinate to the open cover 
$(V, V')$ of $\Di$ satisfying also conditions (a) and (b), i.e.,
\begin{equation}\label{e3.1}
\widehat V\cap \widehat V'\Subset\mathfrak M_a\quad \textrm {and }\quad
\frac{\partial\rho_{ V}}{\partial\bar z}=-\frac{\partial \rho_{ V'}}{\partial\bar z}=\frac{g(z)}{1-|z|^2},\ \, z\in\Di,  \textrm{ \ where\ } g\in C_\rho^\infty(\Di).
\end{equation}
Moreover, the closure of the support of $g$ in $\mathfrak M$, $\overline{{\rm supp}(g)}$,  is a compact subset of $\widehat V\cap \widehat V'$.  Thus, $\overline{{\rm supp}(g)}\cap\bar K=\emptyset$.
In turn, since $\overline{{\rm supp}(g)}$ is a compact subset of $\mathfrak M_a$, by Proposition \ref{prop2.4} the set ${\rm supp}(g)$ is quasi-interpolating. 

Let
\begin{equation}\label{e3.2}
\mathfrak d:=\rho(K,{\rm supp}(g))
\end{equation}
be the pseudohyperbolic distance between $K$ and ${\rm supp}(g)$. Since 
$\overline{{\rm supp}(g)}\cap\bar K=\emptyset$ and $\rho$ is lower semicontinuous on $\mathfrak M\times\mathfrak M$, 
\[
0<\mathfrak d<1.
\]
We set
\begin{equation}\label{e3.3}
 \mathfrak e:=\frac{\mathfrak d}{4}.
\end{equation}

Let $\zeta=\zeta(\mathfrak e)$ be an $\mathfrak e$-chain of ${\rm supp}(g)$ with respect to $\rho$. By the argument of the proof of Proposition \ref{prop2.4}, $\zeta $ is an interpolating sequence for $H^\infty$ with the  characteristic $\delta(\zeta  )\in (0,1]$. 

Let
\begin{equation}\label{e3.3a}
c:=\frac{\frac{99}{100}+\frac{2\frac{1}{16}}{1+ (\frac{1}{16} )^2}}{1+\frac{2\frac{99}{100} \frac{1}{16}}{1+(\frac{1}{16})^2}}=0.99220590273\dots .
\end{equation}

Using Lemma \ref{lem2.6} repeatedly, we decompose $\zeta  $ into pairwise disjoint interpolating subsequences $\zeta_i=\{z_{n,i}\}_{n\in\N}$, $1\le i\le k$, where $k\le 2(\lfloor\log_c\delta(\zeta)\rfloor+1)$, whose characteristics $\delta(\zeta_i)$
satisfy
\begin{equation}\label{e3.3b}
\delta(\zeta_i)\ge c.
\end{equation}
Then we have
\begin{Lm}\label{lem3.1}
\[
\frac{\delta(\zeta_i)-\frac{2\frac{\mathfrak e}{4}}{1+ (\frac{\mathfrak e}{4} )^2}}{1-\frac{2\delta(\zeta_i) \frac{\mathfrak e}{4}}{1+(\frac{\mathfrak e}{4})^2}}\ge\frac{99}{100} .
\]
\end{Lm}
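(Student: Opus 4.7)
My plan is to view the left-hand side of the asserted inequality as a single two-parameter map and reduce the problem to monotonicity plus one algebraic identity. Introduce, for $x,y\in[0,1)$,
\[
u(y):=\frac{2y}{1+y^2},\qquad \Phi(x,y):=\frac{x-u(y)}{1-x\,u(y)},
\]
so that the lemma reads $\Phi(\delta(\zeta_i),\mathfrak e/4)\ge 99/100$. The first step is to establish the monotonicity of $\Phi$: a direct computation gives $\partial_x\Phi=(1-u^2)/(1-xu)^2>0$, while $\partial_u\Phi=(x^2-1)/(1-xu)^2<0$ and $u'(y)=2(1-y^2)/(1+y^2)^2>0$ on $[0,1)$. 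Hence $\Phi$ is strictly increasing in $x$ and strictly decreasing in $y$ on $[0,1)^2$.

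The second step is to bound each argument in its unfavourable direction. The decomposition of $\zeta$ built via Lemma \ref{lem2.6} gives $\delta(\zeta_i)\ge c$ by \eqref{e3.3b}. On the other side, $\mathfrak d<1$ was noted just after \eqref{e3.2}, so \eqref{e3.3} yields $\mathfrak e<1/4$ and therefore $\mathfrak e/4<1/16$. By the monotonicity above,
\[
\Phi(\delta(\zeta_i),\mathfrak e/4)\ \ge\ \Phi(c,1/16).
\]

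The third and decisive step — which carries essentially all the content of the lemma — is to recognize that the constant $c$ in \eqref{e3.3a} is defined precisely so that $\Phi(c,1/16)=99/100$. Indeed, solving the equation $\frac{c-u(1/16)}{1-c\,u(1/16)}=\frac{99}{100}$ for $c$, with $u(1/16)=\frac{2(1/16)}{1+(1/16)^2}$, reproduces the right-hand side of \eqref{e3.3a} verbatim. Combining this with the previous display finishes the proof.

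I do not anticipate any genuine obstacle here: the lemma is in effect a bookkeeping verification that the constant defined by \eqref{e3.3a} hits the threshold $99/100$, combined with the elementary monotonicity of the pseudohyperbolic-style shift $\Phi$. The only point requiring a touch of care is routing the derivative with respect to $y$ through $u(y)$ so as to secure the correct sign on the admissible range; everything else is routine algebra.
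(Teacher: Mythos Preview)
Your proof is correct and follows essentially the same approach as the paper: both arguments recast the left-hand side as a Möbius-type quotient in the variables $\delta(\zeta_i)$ and $u(\mathfrak e/4)$, exploit its monotonicity to reduce to the worst case $(\delta(\zeta_i),\mathfrak e/4)=(c,1/16)$, and then observe that the constant $c$ in \eqref{e3.3a} is chosen precisely so that this worst case equals $99/100$. The only cosmetic differences are that the paper states the monotonicity of $h(x)=\frac{2x}{1+x^2}$ and $g(x,y)=\frac{y-x}{1-yx}$ without computing derivatives, and uses the (equivalent) non-strict bound $\mathfrak e/4\le 1/16$.
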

\begin{proof}
By definition, $\frac{\mathfrak e}{4}\le\frac{1}{16}$, see \eqref{e3.2}, \eqref{e3.3},  the function $h(x)=\frac{2x}{1+x^2}$, $x\in [0,1)$, is increasing, and the function $g(x,y)=\frac{y-x}{1-yx}$, $x\in [0,1)$, $y\in [0,1)$, is decreasing in $x$ and increasing in $y$. Hence, due to \eqref{e3.3a},
\[
\frac{\delta(\zeta_i)-\frac{2\frac{\mathfrak e}{4}}{1+ (\frac{\mathfrak e}{4} )^2}}{1-\frac{2\delta(\zeta_i) \frac{\mathfrak e}{4}}{1+(\frac{\mathfrak e}{4})^2}}=g(h(\mbox{$\frac{\mathfrak e}{4}$}),\delta(\zeta_i))\ge g(h(\mbox{$\frac{1}{16}$}),\delta(\zeta_i))\ge g(h(\mbox{$\frac{1}{16}$}),c)=\frac{c-\frac{2\frac{1}{16}}{1+ (\frac{1}{16} )^2}}{1-\frac{2c \frac{1}{16}}{1+(\frac{1}{16})^2}}=0.99,
\]
as required.
\end{proof}

Let $\varepsilon\in (0,1)$, $
N_\varepsilon:=\left\lfloor\log_2\frac{1}{\varepsilon}\right\rfloor +1$.
We consider the sequences $\zeta_{i,j,\varepsilon}:=\{z_{n,i, j,\varepsilon}\}_{n\in\N}$, $1\le j\le N_\varepsilon$, $1\le i\le k$, and $\zeta_{i,\varepsilon}=\bigsqcup_{j=1}^{N_\varepsilon} \zeta_{i,j,\varepsilon}$, $1\le i\le k$, where
\begin{equation}\label{e3.5}
z_{n,i,j,\varepsilon}:=\frac{\frac{\mathfrak e}{4}\omega_{n,i,j}+z_{n,i}}{1+\bar z_{n,i}\cdot \frac{\mathfrak e}{4}\omega_{n,i,j}},
\end{equation}
and  $\{\omega_{n, i,j}\}_{j=1}^{N_\varepsilon}$, $n\in\N$, $1\le i\le k$,  are arbitrary sequences of equally spaced points on the unit circle, counted counterclockwise.

In particular, for all $i$ and $\varepsilon$ the points $z_{n,i,1,\varepsilon},\dots, z_{n, i, N_\varepsilon, \varepsilon}$ lie on the boundary circle of the disk $D(z_{n,i},\frac{\mathfrak e}{4})$ and are equally spaced with respect to the metric $\rho$. 

We apply Lemma \ref{lem2.5}(iii) with $\delta:=\delta(\zeta_i)$ and $\lambda= \frac{\mathfrak e}{4}$. Since by \eqref{e3.3a}
\[
\frac{2\lambda}{1+\lambda^2}=\frac{2\frac{\mathfrak e}{4}}{1+ (\frac{\mathfrak e}{4} )^2}\le \frac{2\frac{1}{16}}{1+ (\frac{1}{16} )^2}<\delta(\zeta_i),
\]
the assumption of the lemma is true. Hence, Lemma \ref{lem2.5}(iii) and 
Lemma \ref{lem3.1} imply that each sequence $\zeta_{i,j,\varepsilon}$ is interpolating and
\begin{equation}\label{e3.6}
\delta(\zeta_{i,j,\varepsilon})\ge\frac{99}{100}.
\end{equation}

Let
\begin{equation}\label{e3.7}
W_i:=\bigsqcup_{n}D(z_{n,i},\mathfrak e)
\end{equation}
be the pseudohyperbolic $\mathfrak e$-neighbourhood of $\zeta_i$. By  definition, $(W_i)_{i=1}^k$ is an open cover of ${\rm supp}(g)$ and each $\zeta_i$ is an $\mathfrak e$-chain of $W_i$, $1\le i\le k$.  
\begin{Lm}\label{lem3.2}
Let $B_{\zeta_{i,j,\varepsilon}}$ be the interpolating Blaschke product having simple zeros at points of $\zeta_{i,j,\varepsilon}$. Then
\begin{equation}\label{e3.8}
\sup_{W_i}|B_{\zeta_{i,j,\varepsilon}}|\le \frac{5\mathfrak e}{4}\quad {\rm and}\quad  \inf_{K}|B_{\zeta_{i,j,\varepsilon}}|\ge \frac{5\mathfrak e}{2}.
\end{equation}
\end{Lm}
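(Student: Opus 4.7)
The plan is to prove the two estimates in \eqref{e3.8} independently; both reduce to the pseudohyperbolic triangle inequality, with the lower bound also invoking Lemma \ref{lem2.5}.

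For the supremum bound, given $z\in W_i$ there is a unique index $n_0$ with $z\in D(z_{n_0,i},\mathfrak e)$ (the disks $D(z_{n,i},\mathfrak e)$ are mutually disjoint because $\delta(\zeta_i)\ge c$ forces $\rho(z_{n,i},z_{m,i})\ge c$ whenever $m\ne n$, which is incompatible with any two disks meeting when $\mathfrak e<1/4$). Since $\rho(z_{n_0,i},z_{n_0,i,j,\varepsilon})=\mathfrak e/4$ by construction \eqref{e3.5}, the triangle inequality for $\rho$ gives $\rho(z,z_{n_0,i,j,\varepsilon})\le 5\mathfrak e/4$. Because $|B_{\zeta_{i,j,\varepsilon}}(z)|=\prod_n\rho(z,z_{n,i,j,\varepsilon})$ is a product of factors in $[0,1]$, retaining only the $n_0$-factor already delivers the desired upper bound.

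For the infimum bound on $K$, the starting observation is that $z\in K$ and $z_{n,i}\in{\rm supp}(g)$ force $\rho(z,z_{n,i})\ge 4\mathfrak e$ for every $n$, by the definition of $\mathfrak d$ and \eqref{e3.3}. The reverse triangle inequality for $\rho$ then yields a uniform lower bound $\rho(z,z_{n,i,j,\varepsilon})\ge 15\mathfrak e/4$ for every $n$. I would next apply Lemma \ref{lem2.5} directly to the interpolating sequence $\zeta_{i,j,\varepsilon}$, whose characteristic is at least $99/100$ by \eqref{e3.6}, taking $\lambda:=15\mathfrak e/4$. The lemma provides $B_{\zeta_{i,j,\varepsilon}}^{-1}(\Di_r)\subset\bigsqcup_n D(z_{n,i,j,\varepsilon},\lambda)$ with $r=\lambda(\delta-\lambda)/(1-\lambda\delta)$, and since our $z$ lies outside every such disk, $|B_{\zeta_{i,j,\varepsilon}}(z)|\ge r$.

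The step I expect to require the most care is the numerical verification that closes the argument: one must check (a) that the hypothesis $2\lambda/(1+\lambda^2)<\delta(\zeta_{i,j,\varepsilon})$ of Lemma \ref{lem2.5} is valid for $\lambda=15\mathfrak e/4$ and $\delta=99/100$, and (b) that the resulting $r$ satisfies $r\ge 5\mathfrak e/2$. Both reduce to monotone one-variable inequalities in $\mathfrak e$ and follow from the range restriction $\mathfrak e<1/4$ inherited from $\mathfrak d<1$. This is the delicate part, since the constants $99/100$, $15/4$ and $5/2$ have been chosen precisely so that these numerical bounds just barely close.
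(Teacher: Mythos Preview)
Your supremum bound is correct and in fact cleaner than the paper's: dropping all factors but one in $|B_{\zeta_{i,j,\varepsilon}}(z)|=\prod_n\rho(z,z_{n,i,j,\varepsilon})$ gives the bound $5\mathfrak e/4$ immediately, whereas the paper routes this through Lemma~\ref{lem2.5} and the Schwarz--Pick containment $D(z_{n,i,j,\varepsilon},\tfrac{5\mathfrak e}{4})\subset b_{\zeta_{i,j,\varepsilon},n}(\Di_{5\mathfrak e/4})$.

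The gap is in the infimum bound, specifically in your claimed verification (a). With $\lambda=15\mathfrak e/4$ and $\delta=99/100$ the hypothesis $2\lambda/(1+\lambda^2)<\delta$ is \emph{not} guaranteed by $\mathfrak e<1/4$: it is equivalent to $\lambda<\dfrac{1-\sqrt{1-\delta^2}}{\delta}\approx 0.8676$, i.e.\ to $\mathfrak e\lesssim 0.2314$. For $\mathfrak e\in(0.2314,\,0.25)$ (e.g.\ $\mathfrak e=0.24$ gives $\lambda=0.9$ and $2\lambda/(1+\lambda^2)\approx 0.9945>0.99$) Lemma~\ref{lem2.5} does not apply with your choice of $\lambda$, so the argument does not close. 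Your check (b) does survive the full range $\mathfrak e<1/4$, but (a) does not; the remark that the constants ``just barely close'' is too optimistic here.

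The paper avoids this by decoupling the choice of $\lambda$ from $\mathfrak e$: it applies Lemma~\ref{lem2.5} once with the fixed value $\lambda=4/5$ (so $2\lambda/(1+\lambda^2)=40/41<99/100$ always), obtaining $r>5/8$, and then uses a Schwarz--Pick step for the biholomorphism $b_{\zeta_{i,j,\varepsilon},n}\colon\Di_{5/8}\to D(z_{n,i,j,\varepsilon},\tfrac{4}{5})$ to show $b_{\zeta_{i,j,\varepsilon},n}(\Di_{5\mathfrak e/2})\subset D(z_{n,i,j,\varepsilon},\tfrac{16\mathfrak e}{5})\subset D(z_{n,i},\tfrac{69\mathfrak d}{80})\subset\Di\setminus K$. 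Your route can also be repaired by shrinking $\lambda$ (any $\lambda\in[\tfrac{4}{5},\tfrac{15\mathfrak e}{4}]$ when $\mathfrak e$ is large, and $\lambda=\tfrac{15\mathfrak e}{4}$ otherwise, will do), but as written the proof is incomplete.
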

\begin{proof}
We apply Lemma \ref{lem2.5} to $B_{\zeta_{i,j,\varepsilon}}$ with $\delta:=\frac{99}{100}$ and $\lambda=\frac{8}{10}$, see \eqref{e3.6}. Then the corresponding parameter $r$ satisfies
\[
r=\frac{19}{100-\frac{99\cdot 4}{5}}\cdot\frac{4}{5}>\frac{5}{8}.
\]
Since in this case
\[
\frac{2\lambda}{1+\lambda^2}=\frac{\frac 85}{1+\frac{16}{25}}=\frac{40}{41}<\frac{99}{100}=:\delta,
\]
the assumption of the lemma is true. Hence, due to part (i) of the lemma  we obtain (in the notation of the lemma) for all $n$,
\begin{equation}\label{e3.9}
b_{\zeta_{i,j,\varepsilon}, n}(\Di_{\frac{5 }{8}})\subset V_{\zeta_{i,j,\varepsilon}, n} \subset D(z_{n,i,j,\varepsilon},\mbox{$\frac{4}{5}$}).
\end{equation}
Since $\frac{5\mathfrak e}{2}\le\frac 58$, applying the Schwarz-Pick theorem
to the map $b_{\zeta_{i,j,\varepsilon}, n}:\Di_{\frac{5 }{8}}\to D(z_{n,i,j,\varepsilon},\mbox{$\frac{4}{5}$})$ and  using that
\[
\rho(z_{n,i},w)\le \rho(z_{n,i,j,\varepsilon},z_{n,i})+\rho(z_{n,i,j,\varepsilon},w)=\frac{\mathfrak e}{4}+\rho(z_{n,i,j,\varepsilon},w),
\]
and the definitions of $\mathfrak d$ and $\mathfrak e$, see \eqref{e3.2}, \eqref{e3.3},  we get from   \eqref{e3.9} for all $n$,
\[
b_{\zeta_{i,j,\varepsilon}, n}(\Di_{\frac{5\mathfrak e}{2}})\subset D(z_{n,i,j,\varepsilon},\mbox{$\frac{5\mathfrak e}{2}\cdot\frac{8}{5}\cdot\frac{4}{5}$})\subset D(z_{n,i},\mbox{$\frac{16\mathfrak e}{5}+\frac{\mathfrak e}{4}$})\subset D(z_{n,i},\mbox{$\frac{69\mathfrak d}{80}$})\subset\Di\setminus K.
\]
From here and from the implication $D(z_{n,i,j,\varepsilon}, \frac{5\mathfrak e}{4} )\subset b_{\zeta_{i,j,\varepsilon}, n}(\Di_{\frac{5\mathfrak e}{4}})$ we obtain for all $i,j,\varepsilon$,
\begin{equation}\label{e3.10}
\sup_{D(z_{n,i,j,\varepsilon}, \frac{5\mathfrak e}{4} )}|B_{\zeta_{i,j,\varepsilon}}|\le \frac{5\mathfrak e}{4}\quad {\rm and}\quad  \inf_{K}|B_{\zeta_{i,j,\varepsilon}}|\ge \frac{5\mathfrak e}{2}.
\end{equation}
Finally, by the triangle inequality for $\rho$,  
\[
W_i=\bigsqcup _n D(z_{n,i},\mathfrak e)\subset \bigsqcup _n D(z_{n,i,j,\varepsilon}, \mbox{$\frac{5\mathfrak e}{4}$} ).
\]
This and \eqref{e3.10} imply inequality \eqref{e3.8}.
\end{proof}

We set 
\begin{equation}\label{e3.11}
W_i ':=W_i\setminus\cup_{j=1}^{i-1}W_j,
\end{equation}
and denote by $\chi_i$ the characteristic function of $W_i'$, $1\le i\le k$. Clearly, each set $W_i'$ is Lebesgue measurable and therefore each $\chi_i\in L^\infty(W_i')$.

Let $L_{W_i}^X: L^\infty(W_i,X)\to C_\rho(\Di,X)$, $1\le i\le k$, be the bounded linear operators of Theorem \ref{te2.1}. Since each $\zeta_i$ is an $\mathfrak e$-chain of $W_i$, where $\mathfrak e\le\frac{1}{4}$, see \eqref{e3.3}, and $\delta(\zeta_i)\ge c>0.99$, see \eqref{e3.3a}, the theorem implies that
\begin{equation}\label{e3.12}
\|L_{W_i}^X\|\le A:=\frac{5^2\cdot 10^6}{3},\quad 1\le i\le k.
\end{equation}

Let  
\begin{equation}\label{e3.13}
B_{\zeta_{i,\varepsilon}}=\prod_{j=1}^{N_\varepsilon}B_{\zeta_{i,j,\varepsilon}}\quad {\rm and}\quad B_{\zeta_{\varepsilon}}:=\prod_{i=1}^k B_{\zeta_i,\varepsilon}
\end{equation}
be the Blaschke products having simple zeros at points of $\zeta_{i,\varepsilon}=\bigsqcup_{j=1}^{N_\varepsilon}\zeta_{i,j,\varepsilon}$, $1\le i\le k$, and $\zeta_{\varepsilon}:=\bigsqcup_{i=1}^{k}\zeta_{i,\varepsilon}$, respectively. Then the  required family of approximating operators of Theorem \ref{te1.1}\,(1),  $L_{\varepsilon}^X: H^\infty(U,X)\to C_\rho(\Di,X)$, is given by the formula:
\begin{equation}\label{e3.14}
L_{\varepsilon}^X(f):=B_{{\zeta_\varepsilon}}\cdot\left(\rho_V   f-\sum_{j=1}^k
 \frac{ L_{W_j}^X\bigl(B_{\zeta_{j,\varepsilon}}\, \chi_j  g  f\bigr)}{B_{\zeta_{j,\varepsilon}}}\right),\quad f\in H^\infty(U,X).
\end{equation}
\subsect{Proofs of parts (i) and (ii) of Theorem \ref{te1.1}(1)}
The required results follow from
\begin{Prop}\label{prop3.3}
\begin{itemize}
\item[(a)]
Every  $L_{\varepsilon}^X$ maps $H^\infty(U,X)$ to $H^\infty(\Di,X)$
and satisfies for all $f\in H^\infty(U,X)$,
\begin{equation}\label{e3.15}
\|L_{\varepsilon}^X(f)\|_{H^\infty(\Di,X)}\le |f|_{H^\infty(U,X)|_\Gamma}+C\varepsilon^{d}\|f\|_{H^\infty(U,X)},
\end{equation}
where $C:=kA\|g\|_{C_\rho(\Di)}$ and $d:=\log_2\frac{4}{5\mathfrak e}$.

In particular, 
\[
\|L_\varepsilon^X\|\le \delta_{\widehat U,\Gamma}+C\varepsilon^{d}.
\]
\item[(b)]
\begin{equation}\label{e3.16}
\sup_{z\in K}\left\|f(z)-\frac{(L_\varepsilon^X(f))(z)}{B_{{\zeta_\varepsilon}}(z)} \right\|_X\le C\varepsilon \|f\|_{H^\infty(U,X)}.
\end{equation}
\end{itemize}
\end{Prop}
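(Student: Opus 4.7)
My plan is to verify three things in order: (i) that formula (3.14) defines an element of $H^\infty(\Di, X)$, (ii) the pointwise approximation bound (3.16) on $K$, and (iii) the Shilov-boundary norm estimate (3.15).

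For (i), I would compute the $\bar\partial$-derivative of the bracketed expression in (3.14). Since $f$ is holomorphic on $U$ and ${\rm supp}(\rho_V) \subset V \subset U$, equation (3.1) gives $\bar\partial(\rho_V f) = gf/(1-|z|^2)$. By Theorem 2.1, each $L_{W_j}^X(B_{\zeta_{j,\varepsilon}}\chi_j g f)$ weakly solves $\bar\partial F = B_{\zeta_{j,\varepsilon}}\chi_j g f/(1-|z|^2)$; dividing by the holomorphic factor $B_{\zeta_{j,\varepsilon}}$ off its zero set and summing, using $\sum_j \chi_j = 1$ on ${\rm supp}(g)$, shows the bracket is $\bar\partial$-closed, hence holomorphic by Weyl's lemma, on $\Di \setminus \zeta_\varepsilon$. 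Multiplication by $B_{\zeta_\varepsilon}$ cancels the simple poles and yields, via Riemann's removable singularity theorem for $X$-valued holomorphic functions, $L_\varepsilon^X(f) \in H^\infty(\Di, X)$.

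For (ii), I would use that $\rho_V \equiv 1$ on $K$ since $\overline{{\rm supp}(\rho_{V'})} \subset \widehat V' \subset \mathfrak M \setminus \bar K$. Hence on $K$ the error equals $\sum_j L_{W_j}^X(B_{\zeta_{j,\varepsilon}}\chi_j g f)/B_{\zeta_{j,\varepsilon}}$. Lemma 3.2 gives $|B_{\zeta_{j,l,\varepsilon}}| \le 5\mathfrak e/4$ on $W_j \supset {\rm supp}(\chi_j g)$ and $|B_{\zeta_{j,l,\varepsilon}}| \ge 5\mathfrak e/2$ on $K$; so the product $B_{\zeta_{j,\varepsilon}}$ of $N_\varepsilon$ such factors satisfies $|B_{\zeta_{j,\varepsilon}}| \le (5\mathfrak e/4)^{N_\varepsilon}$ on ${\rm supp}(\chi_j g)$ and $|B_{\zeta_{j,\varepsilon}}| \ge (5\mathfrak e/2)^{N_\varepsilon}$ on $K$. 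Combined with the operator bound $\|L_{W_j}^X\| \le A$ from (3.12), each ratio is bounded on $K$ by $A \cdot 2^{-N_\varepsilon}\|g\|\|f\| \le A\varepsilon\|g\|\|f\|$; summing the $k$ terms yields (3.16) with $C = kA\|g\|$.

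For (iii), I use the Shilov-boundary identity $\|F\|_{H^\infty(\Di, X)} = \sup_{x \in \Gamma}\|\hat F(x)\|_{X^{**}}$, reduced to the scalar case via Hahn-Banach applied to $\varphi \circ F$, $\varphi \in X^*$. Rewriting
\[
L_\varepsilon^X(f) = B_{\zeta_\varepsilon}\rho_V f - \sum_{j=1}^k \Bigl(\prod_{i \ne j}B_{\zeta_{i,\varepsilon}}\Bigr) L_{W_j}^X(B_{\zeta_{j,\varepsilon}}\chi_j g f),
\]
the second term is uniformly bounded on $\Di$ by $kA(5\mathfrak e/4)^{N_\varepsilon}\|g\|\|f\| \le C\varepsilon^d\|f\|$, where $d = \log_2(4/(5\mathfrak e))$; this bound transfers to its Gelfand extension on $\Gamma$ by weak-$*$ lower semicontinuity. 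The main obstacle is the first term: neither $\rho_V$ nor $f$ is in $H^\infty$. I would pass to a net $(z_\alpha) \subset \Di$ with $z_\alpha \to x \in \Gamma$ and use the continuous extension $\hat\rho_V$ on $\mathfrak M$ from Corollary 2.3 together with the weak-$*$ extension $\hat f:\widehat U \to X^{**}$ of $f$ to show that $B_{\zeta_\varepsilon}(z_\alpha)\rho_V(z_\alpha)f(z_\alpha)$ converges weak-$*$ to $\hat B_{\zeta_\varepsilon}(x)\hat\rho_V(x)\hat f(x)$ on $\widehat V$ and to $0$ off $\widehat V \subset \widehat U$; since $|\hat B_{\zeta_\varepsilon}|, |\hat\rho_V| \le 1$, its $\Gamma$-supremum is at most $|f|_{H^\infty(U,X)|_\Gamma}$. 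Summing gives (3.15), and the operator norm bound follows from $|f|_{H^\infty(U,X)|_\Gamma} \le \delta_{\widehat U,\Gamma}\|f\|$.
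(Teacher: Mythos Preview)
Your proposal is correct and follows essentially the same route as the paper: the weak $\bar\partial$-computation via Theorem~2.1 and $\sum_j\chi_j g = g$, the Lemma~3.2 bounds on $W_j$ and $K$, and the Shilov-boundary norm identity through the weak-$*$ extensions to $\mathfrak M$ (Corollary~2.3 for $\rho_V$, Theorem~2.1(i) for the $L_{W_j}^X$-terms). The only cosmetic difference is that the paper first rewrites $L_\varepsilon^X(f) = \rho_V B_{\zeta_\varepsilon} f - \sum_j \bigl(\prod_{i\ne j} B_{\zeta_{i,\varepsilon}}\bigr) L_{W_j}^X(\ldots)$ and then computes $\bar\partial = 0$ weakly on all of $\Di$, thereby bypassing your removable-singularity step.
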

\begin{proof}
(a)   Since $B_{\zeta_{j,\varepsilon}}\chi_i  g   f\in L^\infty(W_i,X)$  for all $f\in H^\infty(U,X)$, the operator $L_\varepsilon^X$ is well-defined.
Then by the definition of operators $L_{W_j}^X$, see Theorem \ref{te2.1}, we have (in the weak sense) for all $f\in H^\infty(U,X)$, 
\[
\begin{split}
 \frac{\partial\bigl(L_\varepsilon^X(f)\bigr)}{\partial\bar z}=
&
\frac{\partial\bigl(\rho_V  B_{\zeta_{\varepsilon}}  f  \bigr)}{\partial\bar z}
- \sum_{j=1}^k\left(\prod_{i,\,i\ne j}B_{\zeta_{i,\varepsilon}} \right)\cdot \frac{B_{\zeta_{j,\varepsilon}}\chi_j \, g  f}{1-|z|^2}\medskip\\
=&
\frac{\partial\bigl(\rho_V  B_{\zeta_{\varepsilon}}  f  \bigr)}{\partial\bar z}
- \frac{B_{\zeta_{\varepsilon}}\,  g  f}{1-|z|^2}=\frac{\partial\bigl(\rho_V  B_{\zeta_{\varepsilon}}  f  \bigr)}{\partial\bar z}
- \frac{\partial\bigl(\rho_V  B_{\zeta_{\varepsilon}}  f  \bigr)}{\partial\bar z}=0.
\end{split}
\]
Here we used  that $\sum_{i=1}^k\chi_i g=g$ because ${\rm supp}(g)\subset \cup_{i=1}^k W_i=\cup_{i=1}^k W_i'$, see \eqref{e3.2}, \eqref{e3.3}, \eqref{e3.7}.

This shows that every  $L_\varepsilon^X$ maps $H^\infty(U,X)$ to $H^\infty(\Di,X)$.

Let $\varphi\in X^*$. According to Theorem \ref{te2.1}(i)  and \cite[Cor.\,1.5]{Br1} for each $1\le j\le k$, 
\[
\varphi\bigl( L_{W_j}^X\bigl(B_{\zeta_{j,\varepsilon}}\, \chi_i  g  f \bigr)\bigr)=L_{W_j}^\Co\bigl(\varphi\bigl(B_{\zeta_{j,\varepsilon}}\, \chi_i  g  f\bigr) \bigr)\in C_\rho(\Di)
\]
and this function extends to a continuous function on $\mathfrak M$.  Similarly, by Corollary \ref{cor2.3}, $\rho_V$ has a continuous extension $\hat\rho_V$ to $\mathfrak M$. 
These and \eqref{e3.14} imply that $L_{W_j}^X\bigl(B_{\zeta_{j,\varepsilon}}\, \chi_i  g  f\bigr)$ and $L_\varepsilon^X(f)$ extend to   weak-$*$ continuous functions on $\mathfrak M$ with values in the bidual $X^{**}$ of $X$ denoted by $\hat{L}_{W_j}^X\bigl(B_{\zeta_{j,\varepsilon}}\chi_i  g  f\bigr)$ and $\hat{L}_{\varepsilon}^X(f) $, respectively, such that
\begin{equation}\label{e3.17}
\begin{split}
\sup_{x\in\mathfrak M}\bigl\|\bigl(\hat{L}_{W_j}^X\bigl(B_{\zeta_{j,\varepsilon}}  \chi_i  g  f\bigr)\bigr)(x)\bigr\|_{X^{**}}=&
\|L_{W_j}^X\bigl(B_{\zeta_{j,\varepsilon}}  \chi_i  g  f\bigr)\|_{C_\rho(\Di,X)}\medskip\\
\sup_{x\in \Gamma}\,\bigl\|\bigl(\hat{L}_{\varepsilon}^X(f)\bigr)(x)\bigr\|_{X^{**}}=&
\| L_\varepsilon^X(f)\|_{H^\infty(\Di,X)}.
\end{split}
\end{equation}
Here we used that  $\Gamma$ is the Shilov boundary of $H^\infty$.

Now, we obtain by \eqref{e3.17}, \eqref{e3.13}, \eqref{e3.14} and Lemma \ref{lem3.2} for each $f\in H^\infty(U,X)$,
\[
\begin{array}{l}
\displaystyle
\|L_\varepsilon^X(f)\|_{H^\infty(\Di,X)}=\sup_{x\in\Gamma}\bigl\|\bigl(\hat{L}_{\varepsilon}^X(f)\bigr)(x)\bigr\|_{X^{**}}\medskip\\
\displaystyle \le
\sup_{x\in\Gamma} \left(\bigl\|\bigl(\hat\rho_V  \hat B_{\zeta_{ \varepsilon}} \hat f\bigr)(x)\bigr\|_{X^{**}} 
+ \sum_{j=1}^k
 \frac{|\hat B_{\zeta_{\varepsilon}}(x)| \cdot \bigl\| \bigl(\hat{L}_{W_j}^X\bigl(B_{\zeta_{j,\varepsilon}}  \chi_j  g  f \bigr)\bigr)(x)\bigr\|_{X^{**}}}{|\hat B_{\zeta_{j,\varepsilon}}(x)|}\right)\medskip\\
 \displaystyle= \sup_{x\in\Gamma}\left(\left\|\bigl(\hat\rho_V   \hat f\bigr)(x)  
 \right\|_{X^{**}}+\sum_{j=1}^k
 \bigl\| \bigl(\hat{L}_{W_j}^X\bigl(B_{\zeta_{j,\varepsilon}}  \chi_j  g  f \bigr)\bigr)(x)\bigr\|_{X^{**}}\right)\le |f|_{H^\infty(U,X)|_\Gamma}\medskip\\
 \displaystyle +\sum_{j=1}^k
 \sup_{x\in\mathfrak M}\bigl\| \bigl(\hat{L}_{W_j}^X\bigl(B_{\zeta_{j,\varepsilon}}  \chi_j  g  f \bigr)\bigr)(x)\bigr\|_{X^{**}}= |f|_{H^\infty(U,X)|_\Gamma}+\sum_{j=1}^k
\|L_{W_j}^X\bigl(B_{\zeta_{j,\varepsilon}} \chi_i  g  f\bigr)\|_{C_\rho(\Di,X)}\medskip\\
 \displaystyle \le |f|_{H^\infty(U,X)|_\Gamma} + A\sum_{j=1}^k \|B_{\zeta_{j,\varepsilon}} \chi_j  g  f\|_{C_\rho(\Di,X)}\le |f|_{H^\infty(U,X)|_\Gamma} \medskip\\
 \displaystyle  
 +kA\|g\|_{C_\rho(\Di)}\cdot\left(\frac{5\mathfrak e}{4}\right)^{N_\varepsilon}\cdot\|f\|_{H^\infty(U,X)} \le  |f|_{H^\infty(U,X)|_\Gamma}+C\varepsilon^{d}\|f\|_{H^\infty(U,X)},
\end{array}
\]
where $C:=kA\|g\|_{C_\rho(\Di)}$ and $d:=\log_2\frac{4}{5\mathfrak e}$.

This proves inequality \eqref{e3.15}.

In turn, using \eqref{e3.13}, Lemma \ref{lem3.2} and the fact that $K\subset V$, we obtain 
\[
\begin{array}{l}
\displaystyle \sup_{z\in K}\left\|f(z)-\frac{(L_\varepsilon^X(f))(z)}{B_{\zeta_{ \varepsilon}}(z)} \right\|_X=\sup_{z\in K}\left\|\sum_{j=1}^k
 \frac{   \bigl(L_{W_j}^X\bigl(B_{\zeta_{j,\varepsilon}}  \chi_j  g  f \bigr)\bigr)(z)}{  B_{\zeta_{j,\varepsilon}}(z)}\right\|_{X}\medskip\\
 \displaystyle \le kA\|g\|_{C_\rho(\Di)}\left(\frac{5\mathfrak e}{4}\right)^{N_\varepsilon}\cdot\left(\frac{2}{5\mathfrak e}\right)^{N_\varepsilon}\|f\|_{H^\infty(U,X)} \le C\varepsilon\|f\|_{H^\infty(U,X)},
 \end{array}
\]

This completes the proof of \eqref{e3.16} and  the proposition.
\end{proof}

Thus, parts (i) and (ii) of Theorem \ref{te1.1}(1) are proved.
\subsect{Proof of part (iii) of Theorem \ref{te1.1}(1)}
In our notation, $\zeta=\{z_n\}_{n\in\N}$,
$r:=5\cdot 2^{-d}=\frac{\mathfrak e}{4}$, and the pseudohyperbolic disks $D_{\frac{\mathfrak e}{4}}(z_n)$, $n\in\N$, are mutually disjoint because $\zeta$ is an $\mathfrak e$-chain. Moreover, by our construction, see \eqref{e3.5}, \eqref{e3.13},  the boundary circle of each pseudohyperbolic disk $D_{r}(z_n)$
 contains $N_\varepsilon:=\lfloor\log_2\frac{1}{\varepsilon}\rfloor +1$ points of zero locus $\zeta_\varepsilon$ of $B_{\zeta_\varepsilon}$ equally spaced with respect to $\rho$, and also $\bar\zeta_\varepsilon\cap \bar K=\emptyset$.  Thus, it remains to prove inequality \eqref{eq1.8} for the characteristic $\delta(\zeta_\varepsilon)$.
 
In the proof, we use the following technical result.
\begin{Lm}\label{lem3.3}
If $z\in\Di$ and $w\in \overline{D(z,tR)}$ for some 
 $R,t\in (0,1)$, then
 for each $y\in\Di\setminus D(z,R)$
\begin{equation}\label{e3.18}
\rho(y,z)^{a_{R,t}}\le\rho(y,w)\le \rho(y,z)^{b_{R,t}},
\end{equation}
where $a_{R,t}:=\frac{\ln\frac{1-tR^2}{(1-t)R}}{\ln \frac{1}{R}}$, $b_{R,t}:=\frac{\ln\frac{1+tR^2}{(1+t)R}}{\ln \frac{1}{R}}$.
\end{Lm}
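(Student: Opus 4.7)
The plan is to reduce the lemma, via M\"obius invariance of $\rho$ and a hyperbolic substitution, to a single one-variable convexity statement. First I would center at $z$ using the M\"obius automorphism $\phi_z(\zeta)=(\zeta-z)/(1-\bar z\zeta)$; setting $Y:=\phi_z(y)$, $W:=\phi_z(w)$, $u:=\rho(y,z)\ge R$ and $v:=\rho(z,w)\le tR$, invariance of $\rho$ gives $\rho(y,w)=|\phi_Y(W)|$. A direct parameterization of $W$ on the circle $|W|=tR$ (after rotating so $Y$ is real) shows that the extrema of $|\phi_Y(W)|$ over $\{|W|\le tR\}$ are attained at the two antipodal points of this circle and supply the sharp pseudohyperbolic triangle inequality
\[
\frac{u-tR}{1-utR}\le\rho(y,w)\le\frac{u+tR}{1+utR}.
\]
Thus the lemma reduces to showing $u^{a_{R,t}}\le(u-tR)/(1-utR)$ and $(u+tR)/(1+utR)\le u^{b_{R,t}}$ for $u\in[R,1)$.

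Next I would pass to hyperbolic coordinates by setting $u=\tanh\tau$, $tR=\tanh\sigma$, and $R=\tanh\tau_0$, using the identity $\tanh(\tau\pm\sigma)=(u\pm tR)/(1\pm utR)$. Writing $\alpha(\tau):=\ln\tanh\tau<0$, the two inequalities become $\alpha(\tau+\sigma)\le b_{R,t}\,\alpha(\tau)$ and $\alpha(\tau-\sigma)\ge a_{R,t}\,\alpha(\tau)$ on $\tau\ge\tau_0$. The exponents $a_{R,t}$ and $b_{R,t}$ were chosen precisely so that equality holds at $\tau=\tau_0$, so the content of the claim is that the ratios $\alpha(\tau+\sigma)/\alpha(\tau)$ and $\alpha(\tau-\sigma)/\alpha(\tau)$ are, respectively, non-decreasing and non-increasing in $\tau$ on $[\tau_0,\infty)$.

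Both monotonicity statements follow from a single fact: the function $\beta(\tau):=\ln(-\alpha(\tau))=\ln\ln\coth\tau$ is convex on $(0,\infty)$. Convexity of $\beta$ forces $\beta(\tau+\sigma)-\beta(\tau)=\ln[\alpha(\tau+\sigma)/\alpha(\tau)]$ to be non-decreasing in $\tau$ for every $\sigma>0$, which yields the first monotonicity; the same convexity applied to the shift $\tau\mapsto\tau-\sigma$ yields the second. A direct computation shows $\beta''(\tau)\ge 0\iff\cosh(2\tau)\ln\coth\tau\ge 1$, and the substitution $y:=e^{-2\tau}\in(0,1)$ converts this to
\[
\frac{(1+y^2)\tanh^{-1}(y)}{y}\ge 1,
\]
which follows at once from the Taylor expansion $\tanh^{-1}(y)=\sum_{k\ge 0}y^{2k+1}/(2k+1)$: the left-hand side rearranges as $1+\sum_{k\ge 1}\frac{4k}{4k^2-1}\,y^{2k}$, a series with strictly positive coefficients. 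I expect the convexity of $\beta$ to be the main technical obstacle; once it is in place, the remaining reductions are routine.
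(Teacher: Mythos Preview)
Your argument is correct but takes a genuinely different route from the paper. The paper observes that $u:=\ln\rho(\cdot,w)$ and $v:=\ln\rho(\cdot,z)$ are bounded harmonic functions on the annular region $\Di\setminus\overline{D(z,R)}$, both vanishing on the outer boundary; on the inner boundary $\{\rho(\cdot,z)=R\}$ the triangle inequality for $\rho$ gives $b_{R,t}\le u/v\le a_{R,t}$, and then the maximum principle for the harmonic functions $u-a_{R,t}v$ and $u-b_{R,t}v$ yields $a_{R,t}v\le u\le b_{R,t}v$ throughout. This is a two-line potential-theoretic argument that exploits the harmonicity of $\ln|\phi_w|$ directly. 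Your approach instead pushes the triangle inequality all the way through, reduces to a one-variable inequality in the hyperbolic parameter $\tau$, and settles it by proving the convexity of $\tau\mapsto\ln\ln\coth\tau$ via a Taylor expansion. Your method is entirely real-analytic and self-contained, at the cost of a nontrivial computation; the paper's method is shorter and more conceptual but leans on the (generalized) maximum principle on a doubly connected domain with the unit circle as ideal boundary.
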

\begin{proof}
The functions $u=\ln\rho(\cdot,w)$ and $v=\ln\rho(\cdot, z)$  are harmonic on $K:=\Di\setminus \overline{D(z,R)}$, equal to $0$  on the unit circle (outer boundary of $K$) and by virtue of the triangle inequality for $\rho$, see \cite[Ch.\,I,\,Lm.\,1.4]{Ga}, for $y$ on the boundary of  $ D(z_n,R)$ (inner boundary of $K$) we have
\[
b_{R,t}=\frac{\ln\frac{1+R\cdot tR}{R+tR}}{\ln \frac{1}{R}}\le \frac{\ln\frac{1+\rho(y,z)\cdot \rho(z,w)}{\rho(y,z)+\rho(z,w)}}{\ln \frac{1}{R}}\le \frac{u(y)}{v(y)}\le\frac{\ln\frac{1-\rho(y,z)\cdot \rho(z,w)}{\rho(y,z)-\rho(z,w)}}{\ln \frac{1}{R}}\le \frac{\ln\frac{1-R\cdot tR}{R-tR}}{\ln \frac{1}{R}}=a_{R,t}
\]
Hence, by the maximum modulus principle for harmonic functions
\[
a_{R,t} v \le u \le b_{R,t} v\quad {\rm on}\quad \Di\setminus D(z,R),
\]
as required.
\end{proof}

Since in our notation $\zeta=\{z_m\}_{m\in\N}$, each $z_m$ belongs to a uniquely defined sequence $\zeta_{i(m)}$ for some $1\le i(m)\le k$, and so there exists a uniquely defined $n(m)\in\N$ such that
$z_{m}=z_{n(m),i(m)}$. We set
\begin{equation}\label{e3.19}
z_{m,j,\varepsilon}:=z_{n(m),i(m),j,\varepsilon},\qquad m\in\N,\quad 1\le i\le k,\quad 1\le j\le N_\varepsilon.
\end{equation}
Then $\zeta_\varepsilon=\{z_{m,j,\varepsilon}\}_{m,j}$,
see \eqref{e3.5}, \eqref{e3.13}; hence, by  \eqref{eq1.2},
\begin{equation}\label{e3.20}
\begin{array}{l}
\displaystyle \delta(\zeta_{\varepsilon}):=\inf_{(m_1,j_1)}
\left\{\prod_{(m,j)\ne (m_1,j_1)}
\rho(z_{m_1,j_1,\varepsilon},z_{m,j,\varepsilon})\right\}\medskip\\
\displaystyle =\inf_{(m_1,j_1)}\left\{\left(\prod_{j=1}^{N_\varepsilon}\prod_{m\ne m_1}\rho(z_{m_1,j_1,\varepsilon},z_{m,j,\varepsilon})\right)\cdot\prod_{j\ne j_1}\rho(z_{m_1,j_1,\varepsilon},z_{m_1,j,\varepsilon})\right\}\medskip\\
\displaystyle =\inf_{(m_1,j_1)}\left\{\prod_{j=1}^{N_\varepsilon} \prod_{m\ne m_1}\rho(z_{m_1,j_1,\varepsilon},z_{m,j,\varepsilon})\right\}\cdot N_\varepsilon  \left(\frac{\mathfrak e}{4}\right)^{N_\varepsilon -1}\frac{1-\left(\frac{\mathfrak e}{4}\right)^2}{1-\left(\frac{\mathfrak e}{4}\right)^{2N_\varepsilon }}.
\end{array}
\end{equation}
Here we used the fact that the points $z_{m_1,j,\varepsilon}$, $1\le j\le N_\varepsilon$, on the boundary of $D(z_{m_1},\frac{\mathfrak e}{4})$
are equally spaced with respect to $\rho$, so that
\[
\prod_{j\ne j_1}\rho(z_{m_1,j_1,\varepsilon},z_{m_1,j,\varepsilon})=\prod_{j=1}^{N_\varepsilon-1}\rho\bigl(\mbox{$\frac{\mathfrak e}{4},\frac{\mathfrak e}{4}\cdot e^{\frac{2\pi\sqrt{-1}\cdot j}{N_{\varepsilon}}}$}\bigr)=N_\varepsilon  \left(\frac{\mathfrak e}{4}\right)^{N_\varepsilon -1}\frac{1-\left(\frac{\mathfrak e}{4}\right)^2}{1-\left(\frac{\mathfrak e}{4}\right)^{2N_\varepsilon }}.
\]

Next, since $\zeta$ is an $\mathfrak e$-chain, $\rho(z_{m_1},z_m)\ge \mathfrak e$ for $m\ne m_1$. Hence, by the triangle inequality,
\[
\rho(z_{m_1}, z_{m,j,\varepsilon})\ge  \rho(z_{m_1}, z_{m})-\rho(z_{m}, z_{m,j,\varepsilon})\ge  \mathfrak e-\frac{\mathfrak e}{4}=\frac{3\mathfrak e}{4}.
\]
Applying Lemma \ref{lem3.3} with $z:=z_{m_1}$, $w:=z_{m_1,j_1,\varepsilon}$, $R:=\frac{3\mathfrak e}{4}$ and $t=\frac 13$ we get from \eqref{e3.18} for $y=z_{m,j,\varepsilon}\in \Di\setminus D(z,R)$
\begin{equation}\label{e3.21}
\rho(z_{m,j,\varepsilon},z_{m_1})^{a_1}\le\rho(z_{m,j,\varepsilon},z_{m_1,j_1,\varepsilon})\le \rho(z_{m,j,\varepsilon},z_{m_1})^{b_1},
\end{equation}
where $a_1:=a_{\frac{3\mathfrak e}{4},\frac 13 }:=\frac{\ln\frac{16-3\mathfrak e^2}{8\mathfrak e}}{\ln \frac{4}{3\mathfrak e}}$,
$b_1:=b_{\frac{3\mathfrak e}{4},\frac 13}:=\frac{\ln\frac{16+ 3\mathfrak e^2}{16\mathfrak e}}{\ln \frac{4}{3\mathfrak e}}$.

Similarly, applying Lemma \ref{lem3.3} with $z:=z_{m}$, $w:=z_{m,j,\varepsilon}$, $R:=\mathfrak e$ and $t=\frac 14$ we get from \eqref{e3.18} for $y=z_{m_1}\in \Di\setminus D(z,R)$
\begin{equation}\label{e3.22}
\rho(z_{m_1},z_m)^{a_2}\le\rho(z_{m_1},z_{m,j,\varepsilon})\le \rho(z_{m_1},z_{m})^{b_2},
\end{equation}
where $a_2:=a_{\mathfrak e ,\frac 14 }:=\frac{\ln\frac{4- {\mathfrak e}^2}{3 \mathfrak e}}{\ln \frac{1}{\mathfrak e}}$,
$b_2:=b_{\mathfrak e ,\frac 14}:=\frac{\ln\frac{4+  \mathfrak e^2}{5 \mathfrak e}}{\ln \frac{1}{\mathfrak e}}$.
Now, combining \eqref{e3.21} and \eqref{e3.22} we obtain 
\begin{equation}\label{e3.23}
\rho(z_{m_1},z_m)^{a}\le\rho(z_{m,j,\varepsilon},z_{m_1,j_1,\varepsilon})\le \rho(z_{m_1},z_m)^{b},
\end{equation}
where $a:=a_1a_2$, $b:=b_1b_2$.

Let us estimate the term in  brackets in the last line of \eqref{e3.20} using \eqref{e3.23}. Then we have
\[
 \delta(\zeta_{\varepsilon})\le \left(\inf_{m_1}\prod_{m\ne m_1}\rho(z_{m_1},z_m)\right)^{bN_\varepsilon}\cdot N_\varepsilon  \left(\frac{\mathfrak e}{4}\right)^{N_\varepsilon -1}\frac{1-\left(\frac{\mathfrak e}{4}\right)^2}{1-\left(\frac{\mathfrak e}{4}\right)^{2N_\varepsilon }},
\]
and
\[
\delta(\zeta_{\varepsilon})\ge\left(\inf_{m_1}\prod_{m\ne m_1}\rho(z_{m_1},z_m)\right)^{aN_\varepsilon}\cdot N_\varepsilon  \left(\frac{\mathfrak e}{4}\right)^{N_\varepsilon -1}\frac{1-\left(\frac{\mathfrak e}{4}\right)^2}{1-\left(\frac{\mathfrak e}{4}\right)^{2N_\varepsilon }}.
\]
Or, equivalently (using that $r:=\frac{\mathfrak e}{4}$),
\begin{equation}\label{equ3.25}
(\delta(\zeta))^{aN_\varepsilon}\cdot N_\varepsilon  r^{N_\varepsilon -1}\frac{1-r^2}{1-r^{2N_\varepsilon }}\le  \delta(\zeta_{\varepsilon})\le (\delta(\zeta))^{bN_\varepsilon}\cdot N_\varepsilon  r^{N_\varepsilon -1}\frac{1-r^2}{1-r^{2N_\varepsilon }}.
\end{equation}
By definition, the function $(0,1)\ni\varepsilon\mapsto\delta(\zeta_\varepsilon)\in (0,1]$ is constant on the intervals $[2^{-m},2^{-m+1})$, $m\in\N$, and for each $\varepsilon\in \{2^{-m}\, :\, m\in\N\}$, 
\[
N_\varepsilon=-\log_2\varepsilon +1.
\]
Hence,
\[
(\delta(\zeta))^{aN_\varepsilon}r^{N_\varepsilon -1}= (\delta(\zeta))^{a}\varepsilon^{\log_2\frac{1}{r(\delta(\zeta))^a}}\quad {\rm and}\quad  (\delta(\zeta))^{bN_\varepsilon}r^{N_\varepsilon -1}=(\delta(\zeta))^{b}\varepsilon^{\log_2\frac{1}{r(\delta(\zeta))^b}}.
\]
Using these and \eqref{equ3.25} we get for such $\varepsilon$:
\[
(\delta(\zeta))^{a}\frac{1-r^2}{1-r^{2N_\varepsilon }}N_\varepsilon\varepsilon^{\log_2\frac{1}{r(\delta(\zeta))^a}} <\delta(\zeta_\varepsilon)< (\delta(\zeta))^{b}\frac{1-r^2}{1-r^{2N_\varepsilon }}N_\varepsilon\varepsilon^{b\log_2\frac{1}{r(\delta(\zeta))^b }}.
\]
It is easy to check using the explicit formulas for $a$ and $b$ that 
$a=a(r)\in (1,2 )$, $b=b(r)\in (0,1)$ and $a-1=O\bigl(\frac{1}{\ln\frac{1}{r}}\bigr)$, $1-b=O\bigl(\frac{1}{\ln\frac{1}{r}}\bigr)$ as $r\to 0^+$, as required.

This completes the proof of part (iii) and Theorem \ref{te1.1}(1).
\sect{Proofs of Theorem \ref{te1.1}(2) and \ref{te1.1}(3)}
\subsect{Proof of Theorem \ref{te1.1}(2)}
We proceed along the lines of the proof of Theorem 1.7 of \cite{Br1}.
Let $\widehat K$ be the minimal holomorphically convex subset of $\mathfrak M$ containing $K$. Then by the hypothesis of the theorem, $\widehat K\subset\widehat U$.
Using the holomorphic convexity of $\widehat K$ we obtain:
\begin{Lm}[\mbox{\cite[Lm.\,5.1]{Br1}}]\label{le4.1}
There exist functions $f_1,\dots, f_k\in H^\infty$ such that
\begin{equation}\label{e4.1}
\widehat K\subset \Pi:=\bigl\{x\in\mathfrak M\, :\, \max_{1\le i\le k} |\hat f_i(x)|\le 1\bigr\}\subset\widehat U.
\end{equation}
\end{Lm}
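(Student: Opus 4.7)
The plan is to exploit the holomorphic convexity of $\widehat K$ together with the compactness of $\mathfrak M \setminus \widehat U$ by a standard covering argument. Since $\widehat U$ is open and $\mathfrak M$ is compact, $\mathfrak M\setminus\widehat U$ is a compact set disjoint from $\widehat K$.

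First, I would use the defining property of holomorphic convexity pointwise: for each $x \in \mathfrak M\setminus\widehat U$, since $x \notin \widehat K$, there exists $f_x \in H^\infty$ with $\sup_{\widehat K}|\hat f_x| < |\hat f_x(x)|$. After rescaling $f_x$ by a suitable positive constant, I may assume
\[
\sup_{\widehat K}|\hat f_x| < 1 < |\hat f_x(x)|.
\]
The set $U_x := \{y \in \mathfrak M : |\hat f_x(y)| > 1\}$ is then open (as the Gelfand transform is continuous) and contains $x$.

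Next, since $\{U_x\}_{x \in \mathfrak M \setminus \widehat U}$ is an open cover of the compact set $\mathfrak M \setminus \widehat U$, I would extract a finite subcover, yielding functions $f_1,\dots,f_k \in H^\infty$ with $\sup_{\widehat K}|\hat f_i| < 1$ for all $i$, and such that for every $y \in \mathfrak M \setminus \widehat U$ there exists $i$ with $|\hat f_i(y)| > 1$.

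Finally, setting $\Pi := \{x \in \mathfrak M : \max_{1 \le i \le k}|\hat f_i(x)| \le 1\}$, the first inclusion $\widehat K \subset \Pi$ follows since $|\hat f_i| < 1$ on $\widehat K$ for each $i$; the second inclusion $\Pi \subset \widehat U$ follows by contrapositive from the covering property, because any $y \notin \widehat U$ satisfies $|\hat f_i(y)| > 1$ for some $i$, hence $y \notin \Pi$. There is no real obstacle: the only subtlety is ensuring the rescaling normalization is admissible, which is immediate since $H^\infty$ is closed under multiplication by scalars and the strict inequality at $x$ survives the normalization.
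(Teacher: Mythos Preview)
Your argument is correct and is exactly the standard compactness proof one would expect for this kind of statement. The paper does not actually give its own proof of Lemma~\ref{le4.1}; it simply cites \cite[Lm.\,5.1]{Br1}, so there is nothing to compare against beyond noting that your compactness-plus-normalization argument is the natural one and matches what the cited reference does.
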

We set $\widehat U':=\mathfrak M\setminus \Pi$. Applying the argument in the proof of part (1) of the theorem, see section~3.1, to the cover $(\widehat U,\widehat U')$ of $\mathfrak M$, we obtain an open cover $(\widehat V,\widehat V')$ of $\mathfrak M$ such that $\Pi\subset\widehat V$, and a partition of unity $\{\rho_{ V},\rho_{ V'}\}$ subordinate to the open cover 
$(V, V')$ of $\Di$, where $V=\widehat V\cap\Di$ and  $V'=\widehat V'\cap\Di$, satisfying 
\begin{equation}\label{equa4.2}
\widehat V\cap \widehat V'\Subset\mathfrak M_a\quad \textrm {and }\quad
\frac{\partial\rho_{ V}}{\partial\bar z}=-\frac{\partial \rho_{ V'}}{\partial\bar z}=\frac{g(z)}{1-|z|^2},\ \, z\in\Di,  \textrm{ \ where\ } g\in C_\rho^\infty(\Di).
\end{equation}
Moreover, the closure of the support of $g$ in $\mathfrak M$, $\overline{{\rm supp}(g)}$,  is a compact subset of $\widehat V\cap \widehat V'$.  Thus, $\overline{{\rm supp}(g)}\cap\Pi=\emptyset$. This implies that the function $\max_{1\le j\le k}|\hat f_{j}|$ is greater than $1$ on $\overline{{\rm supp}(g)}$. By  \eqref{equa4.2}, $\widehat W:=\widehat V\cap \widehat V'\Subset\mathfrak M_a$ is an open neighbourhood of $\overline{{\rm supp}(g)}$ which does not intersect $\Pi$. Thus for some $r>1$ the set $\overline{{\rm supp}(g)}$ is covered by open sets 
\[
\widehat W_i:=\{x\in \mathfrak M\,:\, |\hat f_{i}(x)|>r\}\cap \widehat W,\quad 1\le i\le k. 
\]
Let $\zeta=\{z_n\}$ be a $\frac 12 $-chain of ${\rm supp}(g)$ with respect to $\rho$. By the argument of the proof of Proposition \ref{prop2.4}, $\zeta $ is an interpolating sequence for $H^\infty$. 

\noindent Let
\[
O:=\bigsqcup_{n}D(z_{n},\mbox{$\frac 12 $})
\]
be the pseudohyperbolic $\frac 12$-neighbourhood of $\zeta$. By  definition, $O$ is an open neighbourhood of ${\rm supp}(g)$ and $\overline{O}\subset\mathfrak M_a$.

We set 
\begin{equation}\label{e4.2}
U_i:=\widehat W_i\cap O\quad {\rm and }\quad U_i ':=U_i\setminus\cup_{j=1}^{i-1}U_j,\quad 1\le i\le k.
\end{equation}
Then $\overline{U}_i\subset\mathfrak M_a$; hence, by Proposition \ref{prop2.4} each $U_i$ is a quasi-interpolating set. In particular, the bounded linear operators
$L_{U_i}^X: L^\infty(U_i,X)\to C_\rho(\Di,X)$, $1\le i\le k$,  of Theorem \ref{te2.1} are well defined. 

Let $\varepsilon\in (0,1)$ and let
 \begin{equation}\label{e4.3}
n_\varepsilon:=\left\lfloor\log_r\left(\frac{1}{\varepsilon}\right)\right\rfloor+1\in \N.
\end{equation}
Let us consider the family of bounded linear operators $E_{\varepsilon}^X: H^\infty(U,X)\to C_\rho(\Di,X)$,
\begin{equation}\label{e4.4}
E_{\varepsilon}^X(f):=\rho_V f-\sum_{i=1}^k f_i^{n_\varepsilon}\cdot L_{U_i}^X\left(\frac{\chi_i\,g f}{f_i^{n_\varepsilon}}\right).
\end{equation}
Then the desired result is the consequence of the following:
\begin{Prop}\label{prop4.4}
Every  $E_{\varepsilon}^X$ maps $H^\infty(U,X)$ to $H^\infty(\Di,X)$
and satisfies 
\begin{itemize}
\item[(i)] 
For all $f\in H^\infty(U,X)$,
\begin{equation}\label{e4.5}
\sup_{z\in K}\left\|f(z)- (E_{\varepsilon}^X(f))(z)\right\|_X\le C_0\varepsilon\|f\|_{H^\infty(U,X)},
\end{equation}
where $C_0=k M\|g\|_{C_\rho(\Di)}$, $M:=\max_{1\le i\le k}\|L_{U_i}^X\|$.\smallskip
 \item[(ii)] 
 \begin{equation}\label{e4.6}
 \|E_{\varepsilon}^X\|\le C_1\varepsilon^{-d_1},
 \end{equation}
 where $C_1:=\frac{2R k M\|g\|_{C_\rho(\Di)}}{r}$,   $d_1:=\log_r \frac{R}{r}$,
 $R:=\max_{1\le i\le k}\|f_i\|_{H^\infty}$.
 \end{itemize}
 \end{Prop}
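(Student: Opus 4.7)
The plan is to follow the blueprint of the proof of Proposition~\ref{prop3.3}, with the Blaschke-product weight $B_{\zeta_\varepsilon}$ replaced by the ``small denominator'' $f_i^{n_\varepsilon}$ produced by Lemma~\ref{le4.1}. The construction of the partition of unity in Section~3.1, applied to the cover $(\widehat U,\mathfrak M\setminus\Pi)$, is set up so that $\rho_V\equiv 1$ on $K$ (because the support of each $\rho_i$ in $\rho_{V'}$ can be taken to avoid $\bar K$, since $\Pi\subset\widehat V$); this is the geometric fact that will make the approximation error on $K$ collapse to the $L_{U_i}^X$ remainders.

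First I would verify that $E_\varepsilon^X(f)\in H^\infty(\Di,X)$. Since each $f_i$ is holomorphic and every $L_{U_i}^X$ furnishes a weak solution of the Banach-valued $\bar\partial$ equation \eqref{eq2.2}, a direct weak computation telescopes:
\[
\frac{\partial E_\varepsilon^X(f)}{\partial\bar z}=\frac{gf}{1-|z|^2}-\sum_{i=1}^k f_i^{n_\varepsilon}\cdot\frac{\chi_i g f/f_i^{n_\varepsilon}}{1-|z|^2}=\frac{gf}{1-|z|^2}\Bigl(1-\sum_{i=1}^k\chi_i\Bigr)=0,
\]
because the sets $U_i'$ form a measurable partition of $\mathrm{supp}(g)$. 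Boundedness of each summand follows from $L_{U_i}^X$ mapping into $C_\rho(\Di,X)$ together with $\chi_i g f/f_i^{n_\varepsilon}\in L^\infty(U_i,X)$ (which uses $|f_i|>r$ on $U_i$).

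For part (i), restricting to $K$ and using $\rho_V=1$ there gives $f(z)-E_\varepsilon^X(f)(z)=\sum_{i=1}^k f_i^{n_\varepsilon}(z)\,L_{U_i}^X(\chi_i g f/f_i^{n_\varepsilon})(z)$ on $K$. The two pointwise inputs are $|f_i|\le 1$ on $K\subset\Pi$ and $|f_i|>r$ on $U_i$. Combined with $\|L_{U_i}^X\|\le M$, $\|g\|_{C_\rho(\Di)}$ boundedness, and the choice \eqref{e4.3} of $n_\varepsilon$ (which forces $r^{-n_\varepsilon}\le\varepsilon$), this yields each term $\le M\|g\|_{C_\rho(\Di)}\|f\|_{H^\infty(U,X)}\varepsilon$; summing over $i=1,\dots,k$ gives \eqref{e4.5} with $C_0=kM\|g\|_{C_\rho(\Di)}$.

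For part (ii), I would bound the same decomposition on all of $\Di$. The term $\rho_V f$ is trivially $\le\|f\|_{H^\infty(U,X)}$, while each remaining summand uses $\|f_i^{n_\varepsilon}\|_{H^\infty}\le R^{n_\varepsilon}$ combined with $\|\chi_i gf/f_i^{n_\varepsilon}\|_{L^\infty(U_i,X)}\le r^{-n_\varepsilon}\|g\|_{C_\rho(\Di)}\|f\|$ to yield $(R/r)^{n_\varepsilon}M\|g\|_{C_\rho(\Di)}\|f\|$. The definition of $n_\varepsilon$ converts $(R/r)^{n_\varepsilon}$ into $(R/r)\varepsilon^{-d_1}$ with $d_1=\log_r(R/r)$; summing over $i$ and absorbing the $\rho_V f$ contribution into the dominant factor $\varepsilon^{-d_1}\ge 1$ delivers \eqref{e4.6} with $C_1=2RkM\|g\|_{C_\rho(\Di)}/r$. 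No step is a serious obstacle once the setup is in place; the main content is the bookkeeping of the exponent $(R/r)^{n_\varepsilon}$, which is precisely what the $f_i^{n_\varepsilon}$-trick has been designed to produce.
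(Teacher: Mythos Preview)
Your proposal is correct and follows essentially the same approach as the paper's proof: the same weak $\bar\partial$ computation using $\sum_i\chi_i g=g$, the same use of $\rho_V\equiv 1$ on $K\subset\Pi$ together with $|f_i|\le 1$ on $\Pi$ and $|f_i|>r$ on $U_i$ for part~(i), and the same $(R/r)^{n_\varepsilon}$ bookkeeping for part~(ii). The only cosmetic difference is that the paper presents (ii) before (i), but the estimates and constants coincide.
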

 
 \begin{proof}
Since $\frac{\chi_i\, gf}{f_i^{n_\varepsilon}}\in L^\infty(U_i,X)$  for all $f\in H^\infty(U,X)$, the operator $E_\varepsilon^X$ is well-defined.
Then by the definition of operators $L_{U_i}^X$, see Theorem \ref{te2.1}, we have (in the weak sense) for all $f\in H^\infty(U,X)$, see \eqref{e3.1},
\[
\frac{\partial\bigl(E_\varepsilon^X(f)\bigr)}{\partial\bar z}=\frac{\partial\bigl(\rho_V    f  \bigr)}{\partial\bar z}
- \sum_{i=1}^k f_i^{n_\varepsilon}\cdot\frac{\frac{\chi_i \, g  f}{f_i^{n_\varepsilon}}}{1-|z|^2}=\frac{\partial\bigl(\rho_V    f  \bigr)}{\partial\bar z}
- \frac{ g  f}{1-|z|^2}=\frac{\partial\bigl(\rho_V    f  \bigr)}{\partial\bar z}
- \frac{\partial\bigl(\rho_V    f  \bigr)}{\partial\bar z}=0.
\] 
Here we used  that $\sum_{i=1}^k\chi_i g=g$ because ${\rm supp}(g)\subset \cup_{i=1}^k U_i=\cup_{i=1}^k U_i'$.

This shows that every  $E_\varepsilon^X$ maps $H^\infty(U,X)$ to $H^\infty(\Di,X)$.

Now, for each $f\in H^\infty(U,X)$, we obtain in notation of equations \eqref{e4.5}, \eqref{e4.6},
\[
\begin{split}
\displaystyle
\|E_\varepsilon^X(f)\|_{H^\infty(\Di,X)}\le & \|\rho_V f\|_{C_\rho(\Di,X)}+\left(\max_{1\le i\le k}\|f_i\|_{H^\infty}\!\right)^{n_\varepsilon}\cdot\sum_{i=1}^k\|L_{U_i}^X\|\cdot \left\|\frac{gf}{f_i^{n_\varepsilon}}\right\|_{L^\infty(U_i,X)}\smallskip\\
 \le &\|f\|_{H^\infty(U,X)}+ \left(\frac{R}{r}\right)^{n_\varepsilon}k M\|g\|_{C_\rho(\Di)}\|f\|_{H^\infty(U,X)}.
\end{split}
\]

From this and the definition of $n_\varepsilon$, see \eqref{e4.3},  follows \eqref{e4.6}:
\[
\|E_\varepsilon^X\|\le \left(1+\varepsilon^{-\log_r \frac{R}{r}}\right)\frac{R k M\|g\|_{C_\rho(\Di)}}{r}\le C_1\varepsilon^{-d_1}.
\]

Similarly, since $K\subset\Pi\cap\Di\subset V$ and $\max_{1\le i\le k}|\hat f_i|_\Pi|\le 1$, we obtain \eqref{e4.5}:
\[
\begin{split} 
 \sup_{z\in K}\left\|f(z)- (E_\varepsilon^X(f))(z)\right\|_X= &\sup_{z\in K}\left\|\sum_{i=1}^k f_i^{n_\varepsilon}(z)\cdot L_{U_i}^X\left(\frac{\chi_i\,g f}{f_i^{n_\varepsilon}} \right)\!(z)\right\|_X\smallskip\\
\le &\sum_{i=1}^k\|L_{U_i}^X\|\cdot \left\|\frac{gf}{f_i^{n_\varepsilon}}\right\|_{L^\infty(U_i,X)}\le \frac{k M\|g\|_{C_\rho(\Di)}\|f\|_{H^\infty(U,X)}}{r^{n_\varepsilon}}\smallskip\\
\le &\varepsilon k M\|g\|_{C_\rho(\Di)}\|f\|_{H^\infty(U,X)}=:\varepsilon C_0 \|f\|_{H^\infty(U,X)}.
\end{split}
\]
This completes the proof of the proposition and Theorem \ref{te1.1}(2).
\end{proof}
\subsect{Proof of Theorem \ref{te1.1}(3)}
The statement follows from a similar statement for the operator of Theorem \ref{te2.1} and our definitions of the operators $L_\varepsilon^X$ and $E_\varepsilon^X$, see \eqref{e3.14} and \eqref{e4.3}, respectively.

The proof of Theorem \ref{te1.1} is complete.

\sect{Proofs of Theorem \ref{cor1.3} and Corollary \ref{cor1.2}}
\subsect{Proof of Theorem \ref{cor1.3}}
(i) If $K=K_1\times\cdots\times K_n\subset\Di^n$, then its closure in $\mathfrak M^n$ is $\bar K=\bar K_1\times\cdots\times \bar K_n$. Therefore, reducing the neighbourhood $\widehat U$ of $\bar K$, if necessary, without loss of generality, we can assume that it is of the form  $\widehat U=\widehat U_1\times\cdots\times \widehat U_n$, where $\widehat U_i\subset\mathfrak M$ is an open neighbourhood of $\bar K_i$, $1\le i\le n$. Hence, if $U_i:=\widehat U_i\cap\Di$, then
$U=\widehat U\cap\Di^n=U_1\times\cdots \times U_n$. We prove the result by induction on $n$.

For $n=1$ the required statement follows from Theorem \ref{te1.1}\,(i). Assuming that the statement is proved for all $k\le n-1$ with $n>1$ let us prove it for $k=n$. 

Let $K^{n-1}:=K_1\times\dots \times K_{n-1}\subset\Di^{n-1}$, $\widehat U^{n-1}:=\widehat U_1\times\dots\times \widehat U_{n-1}\subset\mathfrak M^{n-1}$, and  $U^{n-1}=\widehat U^{n-1}\cap\Di^{n-1}$. 
Let us consider the function $f\in H^\infty(U,X)$ as an element of $H^\infty(U^{n-1},H^\infty(U_n,X))$ whose value at a point
$(z_1,\dots, z_{n-1})\in U^{n-1}$ is the function $f(z_1,\dots, z_{n-1},\cdot)\in H^\infty(U_n,X)$. Then, applying the induction hypothesis to the function space $H^\infty(U^{n-1},H^\infty(U_n,X))$ we obtain that given $\varepsilon>0$ there exists $h'\in H^\infty(\Di^{n-1},H^\infty(U_n,X))$  and
 $b'=b_1\cdots b_{n-1}\in H^\infty(\Di^{n-1})$, where $b_i$ is an interpolating Blaschke product in $z_i$, $1\le i\le n-1$, and $\inf_{K^{n-1}}|b'|=m>0$, such that
 \begin{equation}\label{eq5.1}
 \sup_{(z_1,\dots, z_{n-1})\in K^{n-1}}\left\|f(z_1,\dots, z_{n-1},\cdot)-\frac{h'(z_1,\dots , z_{n-1})}{b'(z_1,\dots, z_{n-1})}\right\|_{H^\infty(U_n,X)}<\frac{\varepsilon}{2}.
 \end{equation}
 Let us consider the function $h'\in H^\infty(\Di^{n-1},H^\infty(U_n,X))$ as an element of
 the space $H^\infty(U_n, H^\infty(\Di^{n-1},X))$ whose value at $z_n\in U_n$ is the function $(h'(\cdot))(z_n)\in H^\infty(\Di^{n-1},X)$. Applying the induction hypothesis to the function space $H^\infty(U_n, H^\infty(\Di^{n-1},X))$ we obtain that there exist
 $h''\in H^\infty(\Di, H^\infty(\Di^{n-1},X))$ and an interpolating Blaschke product $b_n$ such that $\inf_{K_n} |b_n|>0$ and
 \begin{equation}\label{eq5.2}
 \sup_{z_n\in K_n}\left\|(h'(\cdot))(z_n)- \frac{h''(z_n)}{b_n(z_n)}\right\|_{H^\infty(\Di^{n-1},X)}<\frac{\varepsilon m}{2}.
 \end{equation}
 
We set for $(z_1,\dots, z_n)\in\Di^n$,
 \[
 h(z_1,\dots, z_n):=(h''(z_n))(z_1,\dots, z_{n-1}), \quad b(z_1,\dots, z_n):=b'(z_1,\dots, z_{n-1})\cdot b_n(z_n).
 \]
 Clearly, $h\in H^\infty(\Di^n,X)$  and $\inf_K |b|>0$. Combining \eqref{eq5.1} and \eqref{eq5.2} we obtain for $z=(z^{n-1},z_n)\in K$, where $z^{n-1}:=(z_1,\dots, z_{n-1})\in K^{n-1}$,
 \[
 \left\|f(z)-\frac{h(z)}{b(z)} \right\|_X\le  \left\|f(z)-\frac{h'(z^{n-1})(z_n)}{b'(z^{n-1})}\right\|_{X}
 + \left\|\frac{h'(z^{n-1})(z_n)- \frac{h''(z_n)}{b_n(z_n)}}{b'(z^{n-1})}\right\|_{X}<\frac{\varepsilon}{2}+\frac{\varepsilon m}{2m}=\varepsilon.
 \]
This completes the proof of the induction step and hence of part (i) of the theorem. \smallskip

\noindent (ii) The proof repeats the above. Here, instead of $\bar K$ we work with a compact set $\widehat K\subset\widehat U$ containing $K$ which is an $n$-ary Cartesian product of holomorphically convex subsets of $\mathfrak M$ (existing by the hypothesis of the theorem), and
use part (ii) of Theorem \ref{te1.1} instead of part (i). We leave the details to the reader.
\smallskip

\noindent (iii) Let us prove that if $f$ admits a continuous extension to $\widehat U$, then the function $h\in H^\infty(\Di^n,X)$ in part (i) above can be chosen so that it admits a continuous extension  to $\mathfrak M^n$. 

To this end, in the above notation, reducing the neighbourhood $\widehat U=\widehat U_1\times\cdots\times \widehat U_n$ of $\bar K$, if necessary, without loss of generality we can assume that $f\in H^\infty(U,X)$ admits a continuous extension to
the closure $\bar U=\bar U_1\times\cdots\times \bar U_n\subset\mathfrak M^n$ of $\widehat U$. Then we repeat the induction argument of the above proof. 
Here, at the first step of induction by the hypotheses we obtain that $f\in H_{\rm comp}^\infty (U,X)$ and thus the required result follows from part (iii) of Theorem \ref{te1.1}.
Next, in the notation of the proof of part (i) above, in the proof of the induction step we consider $f\in H^\infty(U,X)$ as a function of $H^\infty(U^{n-1},H^\infty(U_n,X))$. Since $f\in H^\infty(U,X)$ admits a continuous extension to the closure $\bar U$, by the Arzela-Ascoli theorem,  this function also lies in  $H^\infty(U^{n-1},H^\infty(U_n,X)\cap (C(\bar U_n,X)|_{U_n}))$ and admits a continuous extension to $\widehat U^{n-1}$. 
Here $C(\bar U_n,X)|_{U_n}$ is the Banach space of bounded $X$-valued continuous functions on $U_n$  that admit continuous extensions to $\bar U_n$ equipped with supremum norm.
Thus by the induction hypothesis there exist a function $h'\in H^\infty(\Di^{n-1},H^\infty(U_n,X)\cap(C(\bar U_n,X)|_{U_n}))$  which admits a continuous extension to $\mathfrak M^{n-1}$, and a function $b'\in H^\infty(\Di^{n-1})$ as in (i) above, satisfying equation \eqref{eq5.1}.  It follows that $h'$ can be viewed as a function of
 $H^\infty(\Di^{n-1}\times U_n,X)$ which admits a continuous extension  to
 $\mathfrak M^{n-1}\times\bar U_n$. Therefore, it can also be considered as  a function of $H^\infty(U_n, H^\infty(\Di^{n-1},X)\cap (C(\mathfrak M^{n-1},X)|_{\Di^{n-1}}))$ which admits a continuous extension to $\widehat U_n$. Then, applying the induction hypothesis to the latter  space, we obtain a  function $h''\in H_{\rm comp}^\infty(\Di, H^\infty(\Di^{n-1},X)\cap (C(\mathfrak M^{n-1},X)|_{\Di^{n-1}}))$ and an interpolating Blaschke product $b_n$ satisfying equation \eqref{eq5.2}.  By definition, $h''$ determines a function of $H^\infty(\Di^n,X)$ that admits a continuous extension to $\mathfrak M^{n}$.
 The end of the proof is the same as in part (i) above. 
 
 The proof that if $f$ admits a continuous extension to $\widehat U$, then the function $h\in H^\infty(\Di^n,X)$ in part (ii) above can be chosen such that it  admits a continuous extension  to $\mathfrak M^n$ is similar and can be left to the reader.
 
 The proof of Theorem \ref{cor1.3} is complete.
 \subsect{Proof of Corollary \ref{cor1.2}}
 Since by Su\'arez's theorem \cite[Thm.\,2.4]{S0} the algebra $H^\infty$ is separating, 
 for an open set $\widehat O\subset\mathfrak M$ and a point 
 $x\in\widehat O$ there exists a polynomially convex set $\widehat K_x\subset\widehat O$ of the form $\{y\in\mathfrak M\, :\, |\hat f(y)|\le 1\}$ for some $f\in H^\infty$, such that its interior $\widehat U_x$ contains $x$.
 This implies that  an open set $\widehat U\subset\mathfrak M^n$ admits an open cover $(\widehat U_i)_{i\in I}$, where each $\widehat U_i$ is the interior
of an $n$-ary product of polynomially convex subsets of $\mathfrak M$, $\widehat K_i\subset \widehat U$. We set $K_i:=\widehat K_i\cap\Di^n$ and $U_i:=\widehat U_i\cap\Di^n$.  

Suppose that $f$ is an $X$-valued holomorphic function on $U=\widehat U\cap\Di^n$ such that $f(V)\Subset X$ for every  subset $V\Subset\widehat U$ of $U$. In particular, this is valid for subsets $K_i$, $i\in I$. Then
Theorem \ref{cor1.3}\,(ii) implies that for each $i\in I$  there exists a sequence of functions $\{f_{ij}\}_{j\in\N}\subset H^\infty(\Di^n,X)$ which converges uniformly on $K_i$ to $f|_{K_i}$. Further, each $g\in H^\infty(\Di^n,X)$ extends (by means of the Gelfand transform) to a weak-$*$ continuous function $\hat g$ on $\mathfrak M(H^\infty(\Di^n))$ with values in the bidual $X^{**}$ of $X$ such that for every open set $\widetilde W\subset {\rm cl}(\Di^n)$, and $W:=\widetilde W\cap\Di^n\subset\Di^n$,
 \[
 \sup_{x\in\widetilde W}\|\hat g(x)\|_{X^{**}}=\|g\|_{H^\infty(W,X)}.
 \] 
 This implies that the sequence $\{\hat f_{ij}|_{\widetilde U_i}\}_{j\in\N}$, where $\widetilde U_i:=\pi_n^{-1}(\widehat U_i)$, $i\in I$, converges uniformly to an $X^{**}$-valued weak-$*$ continuous function  $\hat f_i$ on $\widetilde U_i$ which extends $f|_{U_i}$. (Note that $\pi_n^{-1}(U_i)=U_i$ and so $U_i$ is dense in $\widetilde U_i$ as $U_i$ is dense in $\widehat U_i\subset\mathfrak M^n$.)
  Since $f(U_i)\subset f(K_i)\Subset X\subset X^{**}$ (here we naturally identify $X$ with a closed subspace of $X^{**}$), and each compact subset of $X$ is weak-$*$ compact in $X^{**}$, the image of the function $\hat f_i$ belongs to a compact subset of $X$, the closure in $X$ of $f(K_i)$. But the weak-$*$ and strong (norm) topologies are equivalent on every (strong)  compact subset of $X$; hence, $\hat f_i$ is an $X$-valued continuous function on $\widetilde U_i$ which extends $f|_{U_i}$. Finally, if $\widetilde U_i\cap\widetilde U_j$ for some $i,j\in I$, then $U_i\cap U_j$ is a dense open subset of 
 $\widetilde U_i\cap\widetilde U_j$ so that $\hat f_i=\hat f_j$ on $\widetilde U_i\cap\widetilde U_j$ as these functions are continuous extensions of the same function $f$. Therefore, the continuous function $\hat f\in C(\widetilde U,X)$, where $\widetilde U:=\pi_n^{-1}(\widehat U)$, given by $\hat f(x)=\hat f_i(x)$ if $x\in\widetilde U_i$, $i\in I$, is well defined and extends the function $f$, as required.
 
 Conversely, it is obvious that if an  $X$-valued holomorphic function $f$ on $U=\widehat U\cap\Di^n$ admits a continuous extension to the open set $\pi_n^{-1}(\widehat U)\subset {\rm cl}(\Di^n)$, then  $f(V)\Subset X$ for  every subset $V\Subset\widehat U$ of $U$.
 
 The proof of the corollary is complete.


\begin{thebibliography}{}





\bibitem{Br1}
A. Brudnyi, Stein-like theory for Banach-valued holomorphic functions on the maximal ideal space of $H^\infty$, Invent. math. {\bf 193} (2013), 187--227.

\bibitem{Br2}
A. Brudnyi, $L^\infty$ estimates for the Banach-valued  
 $\bar\partial$-problem in a disk, Anal. Math. Phys. {\bf 13} (21) (2023).

\bibitem{C}
L. Carleson, Interpolation of bounded analytic functions and the corona 
problem, Ann. of Math. {\bf 76} (1962), 547--559.

\bibitem{Cu}
W. Cutrer, On analytic structure in the maximal ideal space of $H_\infty(D^n)$, llinois J. Math. {\bf 16} (3), 423--433.

\bibitem{Ga}
J.~B.~Garnett, Bounded analytic functions, Academic Press, New York, 1981.



\bibitem{H}
K.~Hoffman, Bounded analytic functions and Gleason parts, Ann. of Math. {\bf 86} (1967), 74--111.



\bibitem{S0}
D.~Su\'{a}rez, \v{C}ech cohomology and covering dimension for the $H^\infty$ maximal ideal space, J. Funct. Anal. {\bf 123} (1994),
233--263.

\bibitem{S1}
D.~Su\'{a}rez, Approximation by ratios of bounded analytic functions, J. Funct. Anal. {\bf 160} (1998), 254--269.

\bibitem{S2}
D.~Su\'{a}rez, Trivial Gleason parts and topological stable rank of $H^\infty$, Amer. J. Math. {\bf 118} (1996), 879--904.


 





\end{thebibliography}
\end{document}